\newtheorem{Definition}{Definition}[section]
\newtheorem{Theorem}[Definition]{Theorem}
\newtheorem*{Theorem*}{Theorem}
\newtheorem{Proposition}[Definition]{Proposition}
\newtheorem{Lemma}[Definition]{Lemma}
\newtheorem{Corollary}[Definition]{Corollary}
\newtheorem{case}{Case}
\theoremstyle{remark}
\newtheorem{Remark}[Definition]{Remark}
\newtheorem{example}[Definition]{Example}
\def\NN{{\mathcal N}}
\def\O{{\mathrm{O}}}
\def\SO{\mathrm{SO}}
\def\mod{{\mathrm{mod}}}
\def\Norm{\mbox{Norm}}
\def\C{{\mathbb C}}
\def\R{{\mathbb R}}
\def\H{{\mathbb H}}
\def\Z{{\mathbb Z}}
\def\Q{{\mathbb Q}}
\newcommand{\red}{\textcolor{red}}
\begin{document}

\title{On the diffeomorphism type of Seifert fibered spherical 3-orbifolds}

\author[M. Mecchia]{Mattia Mecchia}
\address{M. Mecchia: Dipartimento Di Matematica e Geoscienze, Universit\`{a} degli Studi di Trieste, Via Valerio 12/1, 34127, Trieste, Italy.} \email{mmecchia@units.it}
\author[A. Seppi]{Andrea Seppi}
\address{A. Seppi: CNRS and Universit\'e Grenoble Alpes, 100 Rue des Math\'ematiques, 38610 Gi\`eres, France.} \email{andrea.seppi@univ-grenoble-alpes.fr}

\thanks{The first author was partially supported by the FRA 2018 grant  ``Aspetti geometrici, topologici e computazionali delle variet\`{a}'', Universit\`{a} degli Studi di Trieste. The authors are members of the national research group GNSAGA}



\maketitle

\begin{center}
\textit{Dedicated to Bruno Zimmermann on his 70th birthday}
\end{center}

\begin{abstract}
It is well known that, among closed spherical Seifert three-manifolds, only lens spaces and prism manifolds admit several Seifert fibrations which are not equivalent up to diffeomorphism. Moreover the former admit infinitely many fibrations, and the latter exactly two. In this work, we analyse the non-uniqueness phenomenon for orbifold Seifert fibrations. For any closed spherical Seifert three-orbifold, we determine the number of its inequivalent fibrations. When these are in a finite number (in fact, at most three) we provide a complete list. In case of infinitely many fibrations, we describe instead an algorithmic procedure to determine whether two closed spherical Seifert orbifolds are diffeomorphic.
 \end{abstract}

\begin{center}

\end{center}

\section{Introduction} 

\emph{Seifert fibered  $3$-manifolds} were introduced by Seifert \cite{seifert} and are one of the cornerstones  in the study of 3-dimensional  manifolds  (see for example \cite{orlik, scott, boileau-maillot-porti}).   Roughly speaking a Seifert fibered $3$-manifold is a fiber bundle whose fibers are circles, except that some fibers are \emph{exceptional}, meaning that a tubular neighborhood is a torus which is however not fibered as a  product. The presence of singular fibers is reflected by the fact that the base of the fibration is not a manifold, but a particular type of $2$-dimensional orbifold, namely a surface containing \emph{cone points}.

Orbifolds are a generalization of manifolds, which had been introduced in different contexts by Satake \cite{satake}, by Thurston \cite[Chapter 13]{thurston2} and by Haefliger \cite{haefliger} -- useful references being also \cite{boileau-maillot-porti, choi, Dun2, scott}. The most standard example of an orbifold (of dimension $n$) is the quotient of a manifold $M^n$ by a group $\Gamma$ which acts properly discontinuously -- but in general not freely -- on $M$. If the action is not free, \emph{singular points} appear in the quotient $M/\Gamma$, keeping track of the action of point stabilizers $\mathrm{Stab}_\Gamma(x)$ on a neighborhood of a fixed point $x\in M$. More generally, an orbifold is \emph{locally} the quotient of a manifold by the action of a finite group. 

Bonahon and Siebenmann  \cite{bonahon-siebenmann} generalized the definition of Seifert fibration to $3$-orbifolds. This notion is actually more general than its counterpart for $3$-manifolds: here fibers are in general quotients of the circle, and therefore some exceptional fibers are allowed to be intervals, which corresponds to the fact that the base of the fibration has singularities which are not of conical type.  Seifert fibered $3$-manifolds and $3$-orbifolds are classified up to fibration-preserving diffeomorphisms by Seifert invariants, but some of them admit more than one Seifert fibration. The complete topological classification of the Seifert fibered $3$-manifolds follows from the work of several authors (see \cite[Section 2.4.1]{boileau-maillot-porti}). In the present paper we give a classification by diffeomorphism type of closed Seifert fibered \emph{spherical}  $3$-\emph{orbifolds}.  {Throughout the paper  $3$-orbifolds and $3$-manifolds are assumed to be orientable; indeed, in the spherical fibered case this assumption is not a restriction since non-orientable spherical $3$-orbifolds never admit a Seifert fibration, see Corollary \ref{cor nonoriented}.}

Spherical $3$-orbifolds are one of the eight classes of  \emph{geometric} $3$-\emph{orbifolds}, which had a large importance in Thurston's geometrization program.  These are \emph{locally} the quotient of one of the eight Thurston's model geometries by the properly discontinuous action of a group of isometries.  In the case of manifolds,  six of eight Thurston's geometries give Seifert fibered $3$-manifolds (the exceptions are {hyperbolic and Sol geometries}). In the orbifold setting the situation is the same with few exceptions: twelve euclidean $3$-orbifolds and eighteen spherical $3$-orbifolds are not fibered, see \cite[Theorem1]{Dun2}.  The eighteen  spherical  $3$-orbifolds not admitting a Seifert fibration are analyzed by Dunbar \cite{dunbar}. On the other hand each  closed Seifert fibered $3$-orbifold without bad $2$-suborbifold admits a geometric structure \cite[Proposition 2.13]{boileau-maillot-porti}; \emph{bad} means that the orbifold  is not globally the quotient of a manifold.

The $3$-manifolds admitting multiple fibrations are either spherical, euclidean  or covered by the Thurston geometry $S^2 \times \mathbb{R},$ see \cite{scott}. In the euclidean case there exists a unique $3$-manifold admitting {several} inequivalent fibrations and in the $S^2 \times \mathbb{R}$ case we find two of these manifolds. The most intriguing case from this point of view is the spherical one where manifolds belonging to two important classes (lens spaces and prism  manifolds) admit multiple fibrations. The situation for Seifert fibered $3$-orbifolds is similar {in the sense that interesting non-uniqueness phenomena mostly appear for spherical geometry}; see for example \cite[Theorem 2.15]{boileau-maillot-porti}.  

In this paper we compute, for each possible closed fibered spherical $3$-orbifolds, {the number of} fibrations it admits. {When}  the number of  the fibrations is finite we describe explicitly the Seifert invariants of all fibrations of any $3$-orbifold. When infinitely many fibrations occur, we describe an algorithm to decide if two sets of Seifert invariants give the same $3$-orbifold. Hence we obtain  a complete classification of closed Seifert fibered spherical $3$-orbifold up to orientation-preserving diffeomorphism. In fact, {for orientable Seifert fibered $3$-orbifolds the Seifert invariants are complete invariants of oriented fibered $3$-orbifolds. We also remark that we do not need to assume that the singular locus is non-empty, hence the results of this paper also hold true when $\mathcal O$ is a manifold. As already mentioned, the manifold case is well-known from the literature.} 

The  presentation of all possibile diffeomorphisms is rather technical and  summarizing it  in a single statement seems impossible to us. In the present introduction we describe the situation in terms of number of fibrations which are admitted. The complete description of the classification  can be found in Section~\ref{classification}. 

\begin{Theorem} \label{main thm intro}
Let $\mathcal O$ be a closed spherical Seifert fibered $3$-orbifold with base orbifold $\mathcal B$ and $b$ an integer greater than one.

\begin{enumerate}
\item If $\mathcal B\cong S^2(2,2,b), \,D^2(b), \, \mathbb{R}P^2(b), \, D^2(2;b)$ or   $D^2(;2,2,b)$ then $\mathcal O$ admits \textbf{two} inequivalent fibrations with the following exceptions:
\begin{itemize}
\item $\left(S^2(2,2,b);\,\frac{0}{2},\frac{0}{2},\pm\frac{2}{b};\,\mp\frac{2}{b}\right)\!,$ $\left(S^2(2,2,b);\,\frac{0}{2},\frac{1}{2},\pm\frac{1+b/2}{b};\,\mp\frac{1}{b}\right)\! \text{with}\,b \,\text{even,}  $ \\ $\left(D^2(2;);\pm\frac{b}{2};\,;\mp\frac{b}{2};0 \right)\!,$ $\left(D^2(2;b);\,\frac{1}{2};\,\pm \frac{1}{b};\,\mp\frac{1}{2b};\,1\right)$ and  $ \left(D^2(;2,2,b);;\,\frac{1}{2},\frac{1}{2},\pm\frac{1}{b};\,\mp\frac{1}{2b};1\right)$ which admit \textbf{three} fibrations;

\item $\left(S^2(2,2,b);\,\frac{0}{2},\frac{0}{2},\pm\frac{1}{b};\,\mp\frac{1}{b}\right)\!,$ $\left(S^2(2,2,b);\,\frac{0}{2},\frac{1}{2},\pm\frac{(1+b)/2}{b};\,\mp\frac{1}{2b}\right)\! \text{with}\,b \,\text{odd,}$  \\ $\left(D^2(b;);\pm \frac{1}{b};\,;\mp\frac{1}{b};0\right)\!,$ $ \left(D^2(b;);\pm \frac{(1+b)/2}{b};\,;\mp\frac{1}{2b};1\right)\! \text{with}\,b \,\text{odd,}$ $\left(\mathbb{R}P^2(b);\pm \frac{1}{b};\mp\frac{1}{b}\right)\!,$ \\ $\left(D^2(2;b);\,\frac{0}{2};\,\pm \frac{1}{b};\,\mp\frac{1}{2b};\,1\right)$ with $b$ even,  $\left(D^2(;2,2,b);\,;\,\frac{0}{2},\frac{0}{2},\pm\frac{1}{b};\,\mp\frac{1}{2b};0\right)$ with $b$ odd \\ and  $\left(D^2(;2,2,b);\,;\,\frac{0}{2},\frac{1}{2},\pm\frac{(b+1)/2}{b};\,\mp\frac{1}{4b};1\right)$ with $b$ odd  which admit \textbf{infinitely many} fibrations.
\end{itemize}

\item If $\mathcal B \cong S^2(2,3,b)$ or $D^2(;2,3,b)$ with $b=3,4,5$  then $\mathcal O$ admits a \textbf{unique}  fibration with the following exceptions:

\begin{itemize}
\item $\left(S^2(2,3,3)\,;\frac{0}{2}\,,\pm \frac{2}{3}\,,\pm \frac{2}{3}\,;\mp \frac{1}{3}\right)\!,$   
$\left(S^2(2,3,4)\,;\frac{0}{2}\,,\pm \frac{2}{3}\,,\pm \frac{2}{4}\,;\mp \frac{1}{6}\right)\!,$\\
$\left(S^2(2,3,4)\,;\frac{0}{2}\,,\pm \frac{1}{3}\,,\pm \frac{3}{4}\,;\mp \frac{1}{12}\right)\!,$ 
$\left(S^2(2,3,5)\,;\frac{0}{2}\,,\pm \frac{2}{3}\,,\pm \frac{2}{5}\,;\mp\frac{1}{15}\right)\!,$ \\
 $\left(D^2(;2,3,3)\,;;\frac{1}{2}\,,\pm \frac{1}{3}\,,\pm \frac{1}{3};\mp\frac{1}{12};1\right) \!,$ $\left(D^2(;2,3,4)\,;;\frac{1}{2}\,,\pm \frac{1}{3}\,,\pm \frac{1}{4};\mp \frac{1}{24};1\right)$ \\ and  $\left(D^2(;2,3,5)\,;;\frac{1}{2}\,,\pm \frac{1}{3}\,,\pm \frac{1}{5};\mp\frac{1}{60};1\right)$ which admit \textbf{two} fibrations. 
\end{itemize}

\item    If $\mathcal B $ is   a $2$-sphere  with at most two cone points or a  $2$-disk with at most two corner points, then $\mathcal O$ admits  \textbf{infinitely many}  fibrations.

\end{enumerate}

\end{Theorem}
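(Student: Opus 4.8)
The overall plan is to translate the enumeration of Seifert fibrations into an enumeration of circle actions on $S^3$. Since $\mathcal O$ is closed, orientable and spherical, I would write $\mathcal O=S^3/G$ for a finite group $G<\SO(4)$, and use that a Seifert fibration of $\mathcal O$ lifts to a $G$-invariant Seifert fibration of $S^3$, i.e. to an effective $S^1$-action on $S^3$ whose orbit circles are permuted by $G$. Writing $S^3=SU(2)$ and $\SO(4)=(SU(2)\times SU(2))/\{\pm 1\}$, every circle subgroup of $\SO(4)$ is conjugate into the standard maximal torus $T^2$ and has the form $S^1_{(p,q)}=\{(e^{ip\theta},e^{iq\theta})\}$ with $\gcd(p,q)=1$, its induced fibration of $S^3$ having base the spindle $S^2(p,q)$. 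Thus the fibrations of $\mathcal O$ correspond to the slopes $(p,q)$ for which $G$ normalises $S^1_{(p,q)}$, taken up to the self-diffeomorphisms of $\mathcal O$, and the trichotomy of the theorem reflects how many such slopes there are.

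The base orbifold $\mathcal B$ is the quotient of the spindle $S^2(p,q)$ by the image $\bar G$ of $G$ in $\mathrm{Isom}(S^2)$. For a $(2,3,b)$-base, $\bar G$ is the rotation group of a Platonic solid, its centraliser in $\SO(4)$ is minimal, and I expect only the fibre circle itself to be $G$-compatible, giving the generic uniqueness in part (2); the exceptional tuples admitting two fibrations should be exactly those for which both the left and the right Hopf fibrations of $S^3=SU(2)$ descend to $\mathcal O$, which I would detect through the images of $G$ in the two $SU(2)$-factors. For a $(2,2,b)$-base, $\bar G$ is dihedral and contains the half-turn swapping the two order-two points, acting as $(\alpha,\beta)\mapsto(-\alpha,\beta)$; this fixes the fibre slope but only one further slope, yielding the generic count of two in part (1) (the prism phenomenon), with the small and even-$b$ coincidences accounting for the listed triples and for the sporadic cases that fall back into part (3).

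The heart of the final statement is part (3), which I would prove directly. If $\mathcal B$ is a sphere with at most two cone points, then $\bar G$ must be \emph{cyclic}: any orientation-preserving involution interchanging two equal cone points would act by $(\alpha,\beta)\mapsto(-\alpha,\beta)$ and turn the quotient into a $(2,2,b)$-sphere, i.e. into case (1). Hence $G$ is conjugate into the abelian torus $T^2$, so it commutes with \emph{every} circle subgroup $S^1_{(p,q)}\subset T^2$, and each such subgroup descends to a Seifert fibration of $\mathcal O$ over a sphere with at most two cone points. If instead $\mathcal B$ is a disk with at most two corner points, the mirror boundary is the image of a fibre-reversing element $g\in G$, which I would realise as the element of $\Norm_{\SO(4)}(T^2)$ acting by $(\alpha,\beta)\mapsto(-\alpha,-\beta)$; this inverts, and in particular normalises, every $S^1_{(p,q)}$, so once more each slope descends, now with base a disk. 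In both situations the cone- or corner-point orders of the base are explicit functions of $(p,q)$ and $|G|$ that grow without bound as $\gcd(p,q)=1$ and $p,q\to\infty$; therefore infinitely many pairwise non-isomorphic base orbifolds arise, and since equivalent fibrations have isomorphic bases these fall into infinitely many equivalence classes, so $\mathcal O$ admits infinitely many fibrations.

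The main obstacle is not part (3) --- which follows at once from the abelianity of $T^2$ and the fact that $(\alpha,\beta)\mapsto(-\alpha,-\beta)$ normalises the whole pencil of circles --- but the finite cases, and above all the exact determination of the exceptional lists. Two separate difficulties arise there. First, one must decide precisely when two admissible slopes give the \emph{same} orbifold, i.e. when a self-diffeomorphism of $\mathcal O$ carries one fibration to the other; I would obtain such a criterion by analysing the action of $\Norm_{\SO(4)}(G)$ on the lattice of slopes and matching it with the standard equivalence moves on Seifert invariants. Second, the bookkeeping of the Seifert invariants for each base type in parts (1)--(2), with their signs and $\pm$-ambiguities, is delicate, and it is precisely this computation that pins down the specific tuples and the boundary between two, three and infinitely many fibrations for small or even $b$. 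For part (3) the only genuine subtlety is to confirm, in the disk case, that the fibre-reversing element normalises each $S^1_{(p,q)}$ and not merely $T^2$; the displayed action makes this transparent and shows that the disk fibrations are exactly as plentiful as the spherical ones.
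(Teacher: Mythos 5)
Your overall framework --- writing $\mathcal O=S^3/G$, lifting a fibration to a $G$-invariant circle action on $S^3$, and sorting the groups according to which circles they normalise --- is the same as the paper's, and your argument for part (3) is essentially the paper's own: the groups whose quotient fibres over a sphere with at most two cone points are abelian subgroups of the maximal torus, those fibering over a disk with at most two corner points are generalized dihedral extensions by an element inverting the torus, and in either case every circle $S^1_{(u,v)}$ is normalised, yielding infinitely many fibrations with unbounded singularity indices in the base. That part is sound.

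The gap is in parts (1) and (2), i.e.\ in exactly the content that distinguishes this theorem from a qualitative statement. First, your dictionary ``fibrations of $\mathcal O$ $\leftrightarrow$ slopes $(p,q)$ in the standard maximal torus normalised by $G$'' is incomplete: $G$ may preserve a Seifert fibration of $S^3$ that is conjugate to a standard one in $\SO(4)$ but not by any element compatible with $G$, i.e.\ an isometric copy of the Hopf fibration in non-standard position. This is precisely what produces the three-fibration orbifolds in part (1) and \emph{all} the two-fibration exceptions in part (2): for instance $(C_4/C_4,T^*/T^*)$ is conjugate in $\SO(4)$ to $(D_4^*/D_4^*,T^*/T^*)$ because $\{\pm1,\pm i\}$ and $\{\pm1,\pm j\}$ are conjugate in $S^3$ but not in $\O(2)^*$, so the quotient carries two inequivalent Hopf-induced fibrations, with bases $S^2(2,3,3)$ and $D^2(3;2)$. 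Your proposed criterion for the part-(2) exceptions --- that both the left and the right Hopf fibrations descend --- is in fact wrong: when the base is $S^2(2,3,b)$ or $D^2(;2,3,b)$ one of the two factors $L,R$ is a binary platonic group, so $G$ never preserves both the Hopf and the anti-Hopf fibration, and your test would predict no exceptions at all, contradicting the statement. The correct mechanism is the discrepancy between conjugacy in $\SO(4)$ and conjugacy in the normaliser $\NN=\O(2)^*\times S^3$ of the fibration; the paper isolates it as the three explicit phenomena of Remark~\ref{itrefenomeni} (together with Lemma~\ref{lemma due casi}) and then runs through the families of Du Val's list case by case using the invariant computations of Table~\ref{topolino}. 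Second, even granting the right mechanism, the theorem \emph{is} the explicit list of exceptional Seifert invariants with their signs; your proposal defers that entire computation, so as written it does not establish the statement.
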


For the notations for $2$-orbifolds and for fibered $3$-orbifolds we refer the reader to Subsection \ref{subsec:orbifolds} and \ref{sec fibrations}, respectively. We remark that in Theorem \ref{main thm intro}, the list of exceptions sometimes contains the Seifert invariants of different fibrations of the same $3$-orbifold. 
As already said, in Section~\ref{classification} we actually list, for every spherical orbifold admitting multiple inequivalent fibrations as in Theorem \ref{main thm intro}, all of its other fibrations.

We discuss briefly  some topological aspects relating to the theorem. A Seifert fibered $3$-orbifold with base orbifold  a $2$-sphere with at most two cone points has a lens space as underlying topological space and the singular set is a subset of the union of the cores of the two tori giving the lens space; these orbifolds  can be considered as the generalization of lens spaces in the setting of orbifolds. {However, we remark that there are Seifert fibered $3$-orbifolds with base $2$-orbifold different than a sphere with at most two cone points, whose underlying topological space is still a lens space; when this happens, the singular set does not entirely consist of a union of fibers. For instance, t}he other orbifolds in {the third case of Theorem \ref{main thm intro}} (whose base $2$-orbifold is a $2$-disk) can be obtained as a quotient of an  ``orbifold  lens space''  by an involution whose action is not free; in this case  the underlying topological space is always $S^3$, see also \cite{Dun2}.

The  case  $\mathcal{B}\cong S^2(2,2,b)$   contains the classical family of prism manifolds. Prism manifolds admit two inequivalent fibrations, the second one with   $\mathcal{B}\cong \mathbb{R}P^2(b)$, see \cite{orlik} {or \cite[Theorem 2.3]{hatchernotes}. We recover} an explicit description of the relations between the two fibrations of prism manifolds in Case 1 of Subsection \ref{subsec:finitely}. 

As a result of our analysis, we also obtain the following statement in analogy with the situation for spherical Seifert $3$-manifolds:
\begin{Theorem}
If a closed spherical Seifert fibered $3$-orbifold admits several inequivalent fibrations, then its underlying topological space is either a lens space or a prism manifold. 
\end{Theorem}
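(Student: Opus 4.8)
The plan is to derive this statement as a corollary of the complete classification, combining Theorem~\ref{main thm intro} with the explicit lists of Section~\ref{classification}. The first step is to single out exactly which closed spherical Seifert fibered $3$-orbifolds admit more than one inequivalent fibration. By Theorem~\ref{main thm intro} these are: every orbifold whose base is $S^2(2,2,b)$, $D^2(b)$, $\mathbb{R}P^2(b)$, $D^2(2;b)$ or $D^2(;2,2,b)$ (the whole of Case~1, which always gives at least two fibrations); every orbifold fibered over a sphere with at most two cone points or a disk with at most two corner points (the whole of Case~3, which gives infinitely many); and the seven exceptional orbifolds of Case~2, fibered over $S^2(2,3,b)$ or $D^2(;2,3,b)$ with $b\in\{3,4,5\}$. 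Since the underlying topological space $|\mathcal O|$ is a diffeomorphism invariant of $\mathcal O$ and does not depend on the chosen fibration, it then suffices to identify $|\mathcal O|$ for each of these families and to check that it is a lens space or a prism manifold.

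For Case~3 the assertion is immediate from the topological discussion following Theorem~\ref{main thm intro}: an orbifold fibered over a sphere with at most two cone points has a lens space as underlying space, and one fibered over a disk with at most two corner points has underlying space $S^3=L(1,0)$. The same conclusion covers every orbifold singled out in Case~1 as admitting infinitely many fibrations, because each of them is diffeomorphic, as a $3$-orbifold, to one fibered over a base of the previous type. For the orbifolds of Case~1 with only finitely many fibrations, the underlying space is read off from the explicit Seifert invariants collected in Subsection~\ref{subsec:finitely}: the family fibered over $S^2(2,2,b)$ consists precisely of prism manifolds, the fibration over $\mathbb{R}P^2(b)$ is the second prism fibration of the same manifold, and the disk bases $D^2(b)$, $D^2(2;b)$, $D^2(;2,2,b)$ occur as involution quotients whose underlying space is again a prism manifold or a lens space.

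The genuinely delicate case is that of the seven exceptional orbifolds of Case~2, whose base is the polyhedral orbifold $S^2(2,3,b)$ or $D^2(;2,3,b)$. Here the ``few cone points'' principle does not apply, and a generic Seifert fibration over such a base produces spherical space forms of polyhedral type (Poincar\'e-style homology spheres and their relatives) that are neither lens spaces nor prism manifolds; so one must verify that the very specific invariants listed in the theorem force a degeneration. I would establish this by a direct computation of $|\mathcal O|$, presenting each such orbifold as a quotient $S^3/\Gamma$ and analysing the associated topological quotient, and I expect to find $|\mathcal O|\cong S^3$ in every case, hence a lens space. This verification is the main obstacle: it is exactly what excludes the polyhedral space forms and leaves only lens spaces and prism manifolds in the conclusion.
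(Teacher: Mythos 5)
Your overall strategy is the same as the paper's: the theorem is stated there as a byproduct of the classification, with no separate argument, so ``read the underlying space off the lists of Section~\ref{classification}'' is exactly the intended proof. Your treatment of Cases~1 and~3 is fine (with the small caveat that over $S^2(2,2,b)$ the underlying space is a prism manifold only when $m_1=m_2=1$; in the other subcases the simplified fibration has at most two exceptional fibers and one gets a lens space --- but either way the conclusion holds).

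The one substantive issue is your handling of the seven exceptional orbifolds of Case~2, which you correctly identify as the delicate point but then resolve with a wrong guess. It is not true that $|\mathcal O|\cong S^3$ in every such case: for $\left(S^2(2,3,3);\,\frac{0}{2},\frac{2}{3},\frac{2}{3};\,-\frac{1}{3}\right)$ the two order-$3$ cone points have local invariant $\frac{2}{3}$ with $\gcd(2,3)=1$, so they survive as genuine exceptional fibers of the underlying Seifert manifold, which is $\left(S^2;\frac{2}{3},-\frac{1}{3}\right)\cong L(3,q)$, not $S^3$ (equivalently, the group $(C_4/C_4,T^*/T^*)$ has fundamental group of the underlying space $T^*/Q_8\cong \Z_3$). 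What actually closes this case --- and makes your ``main obstacle'' a non-obstacle --- is the observation that in all four $S^2(2,3,b)$ exceptions the order-$2$ cone point carries local invariant $\frac{0}{2}$, so it degenerates to a regular fiber of the underlying manifold (and in the one case with invariant $\frac{2}{4}$ the order drops to $2$); hence after simplifying fractions the underlying Seifert manifold fibers over $S^2$ with at most two exceptional fibers and is a lens space, never a polyhedral space form. The three $D^2(;2,3,b)$ exceptions are, by the diffeomorphisms displayed in Case~5 of Subsection~\ref{subsec:finitely}, the \emph{same} orbifolds as some of the $S^2(2,3,b)$ ones, so no separate computation is needed for them. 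With this substitution your argument is complete and agrees with the paper's.
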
 
However, unlike the manifold case, this is not a complete characterization of non-uniqueness, since there are $3$-orbifolds with underlying manifold a lens space, whose fibration is unique up to diffeomorphism.

Finally we remark that the presence of one  platonic group (tetrahedral, octahedral or icosahedral) or of one of their binary versions in the fundamental group of the $3$-orbifold assures, with {a few} exceptions, the uniqueness of the fibration.

The methods we use in the paper are related to the fact that  closed spherical $3$-orbifolds are \emph{globally} the quotient of the $3$-sphere $S^3$ by the action of a finite group $G$ of isometries. In \cite{mecchia-seppi, Mecchia-seppi2} we have analyzed different aspects of  the classification of finite subgroups of $\SO(4)$ up to conjugacy. Here we continue this kind of analysis with an additional difficulty consisting in considering a classification up to ``fibration-preserving conjugacy''.

\subsection*{Organization of the paper} In Section \ref{sec spherical 3-orbifolds}  we give an introduction to orbifolds of dimension $2$ and $3$, with special attention to the spherical case,  and we recall the definition of  Seifert fibration for orbifolds. In Section \ref{sec: algebraic} we discuss the  classification of finite subgroups of $\SO(4)$. {In Section \ref{sec: Seifert S3 invariant}} we analyze which groups leave invariant the Seifert fibrations of the 3-sphere and we explain {the approach} to the classification of finite subgroups of $\SO(4)$ {we adopt} to get a classification  of Seifert fibered  spherical $3$-orbifolds by diffeomorphism type.  In Section \ref{classification} we explicitly present the classification by distinguishing the case of $3$-orbifolds admitting finitely many inequivalent fibrations and the case of $3$-orbifolds with {infinitely many ones}.

\subsection*{Acknowledgements} We are very grateful to an anonymous referee for suggesting several valuable improvements on a previous version of this manuscript. The first author was partially supported by the FRA 2018 grant  ``Aspetti geometrici, topologici e computazionali delle variet\`{a}'', Universit\`{a} degli Studi di Trieste. The authors are members of the national research group GNSAGA

\clearpage

\section{Spherical and Seifert fibered three-orbifolds} \label{sec spherical 3-orbifolds}

\subsection{Spherical orbifolds}\label{sec:defi orbi}
Let us start by recalling some notions on smooth and spherical orbifolds in any dimension. For details see \cite{boileau-maillot-porti}, \cite{ratcliffe} or \cite{choi}. 

\begin{Definition}\label{Defi iniziale}
A \emph{smooth orbifold} $\mathcal{O}$ {\emph{(without boundary)}} of dimension $n$ is a paracomapct Hausdorff topological space $X$ endowed with an atlas $\varphi_i:U_i\to \widetilde U_i/\Gamma_i$, where:
\begin{itemize}
\item The $U_i$ form an open covering of $\mathcal O$.
\item The $\widetilde U_i$ are open subsets of $\R^n$ on which the finite groups $\Gamma_i$ act smoothly and effectively.
\item Each $\varphi_i$ is a homeomorphism and the compositions $\varphi_j\circ\varphi_i^{-1}$ lift to diffeomorphisms $\widetilde \varphi_{ij}:\widetilde U_i\to\widetilde U_j$. 
\end{itemize}
Moreover, the orbifold $\mathcal O$ is:
\begin{itemize}
\item \emph{Orientable} if the groups $\Gamma_i$ and the lifts $\widetilde \varphi_{ij}$ preserve an orientation of $\R^n$ (and the choice of such an orientation makes $\mathcal O$ oriented). 
\item \emph{Spherical} if each $\widetilde U_i$ is endowed with a Riemannian metric $\widetilde g_i$ of constant curvature $1$ preserved by the action of the groups $\Gamma_i$ and such that each $\widetilde \varphi_{ij}$ is an isometry from $(\widetilde U_i,\widetilde g_i)$ to $(\widetilde U_j,\widetilde g_j)$. 
\end{itemize}  
\end{Definition}
The topological space $X$ is called the \emph{underlying topological space} of the orbifold. 
A \emph{diffeomorphism}  between orbifolds $\mathcal O$ and $\mathcal O'$ is a homeomorphism $f:X\to X'$ of the underlying topological spaces such that each composition $\varphi'_{j'}\circ f|_{U_i}\circ \varphi_i^{-1}$, when defined, can be lifted to a diffeomorphism from $\widetilde U_i$ to its image in $\widetilde U'_{j'}$. If $\mathcal O$ and $\mathcal O'$ are oriented, then $f$ is \emph{orientation-preserving} if the lifts preserve the orientation of the $\widetilde U_i$ and $\widetilde U'_{j'}$. If $\mathcal O$ and $\mathcal O'$ are spherical, $f$ is an \emph{isometry} if the lifts are isometric for the Riemannian metrics $\widetilde g_i$ and $\widetilde g'_{j'}$.

One can define a \emph{local group} associated to every point $x$, namely the smallest possible group $\Gamma$ which gives a local chart $\varphi:U\to\widetilde U/\Gamma$ for $x$. If the local group is trivial, then $x$ is a  \emph{regular point} of $\mathcal O$. Otherwise $x$ is a \emph{singular point}. The set of regular points of $\mathcal O$ is a smooth manifold. {Manifolds are thus special cases of orbifolds, for which the group $\Gamma_i$ in Definition \ref{Defi iniziale} is always the trivial group. We do not assume that the singular locus is non-empty here, hence all the results of this paper also hold when $\mathcal O$ is a manifold.}

{One can give more generally the definition of smooth orbifold with boundary by replacing $\R^n$ by a half-space in $\R^n$ in Definition \ref{Defi iniziale}; we will not need such a notion in this paper, since we only consider closed orbifolds, as in the following definition.
\begin{Definition}
A \emph{closed orbifold} is a smooth orbifold (without boundary) whose underlying topological space is compact.
\end{Definition}
The underlying topological space of a closed orbifold might be a manifold with boundary (see Section \ref{subsec:orbifolds} below for examples), or also more pathological topological spaces having non-manifold points. 
However in Section \ref{subsec:orbifolds} below we will see that the underlying topological space of closed \emph{orientable} orbifolds of dimension three    is always a closed manifold and that of  closed  orbifolds of dimension two   is a compact  manifold with possibly non-empty boundary.}

The most intuitive examples of orbifolds are produced as quotients $\mathcal O=M/G$, for $G$ a group acting smoothly and properly discontinuously on a manifold $M$. In this case the local group of a point $x$ in the quotient $M/G$ is the stabiliser of any of the preimages of $x$ (which is finite). An orbifold is called \emph{good} if it is diffeomorphic to a quotient $M/G$ as above. Otherwise it is called \emph{bad}. 

\begin{Theorem}[{\cite[Theorem 13.3.10]{ratcliffe}}] \label{thm: compact spherical good}
Every closed spherical orbifold is good, and is in fact isometric to a global quotient $S^n/G$ for $G<\O(n+1)$ a finite group of isometries of $S^n$.
\end{Theorem}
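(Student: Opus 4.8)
The plan is to exploit the $(S^n,\O(n+1))$-structure carried by $\mathcal O$ through its \emph{developing map} and \emph{holonomy}. Concretely, I would first pass to the universal orbifold cover $\widetilde{\mathcal O}$ (whose existence is guaranteed by the general theory of orbifolds, independently of goodness) and, by analytically continuing the isometric local charts along paths, construct a local isometry $D\colon \widetilde{\mathcal O}\to S^n$ together with a homomorphism $\rho\colon \pi_1^{\mathrm{orb}}(\mathcal O)\to \O(n+1)$ satisfying the equivariance $D\circ\gamma=\rho(\gamma)\circ D$ for every deck transformation $\gamma$. The key preliminary observation is that, since the underlying space $X$ is compact, the length metric induced by the spherical structure is complete, and this completeness lifts to $\widetilde{\mathcal O}$.

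The heart of the argument is the orbifold version of the completeness theorem for geometric structures: I would show that the local isometry $D$ is in fact a covering map onto $S^n$. The mechanism is geodesic continuation, completeness of $\widetilde{\mathcal O}$ guarantees that geodesics extend indefinitely, and since $D$ sends geodesics to geodesics of the round sphere it follows that $D$ is surjective and enjoys the unique path-lifting property, hence is a covering in the orbifold sense. Because $S^n$ is a manifold, the total space of such a covering is a manifold, so $\widetilde{\mathcal O}$ is itself a manifold, and because $S^n$ is simply connected for $n\geq 2$ the covering $D$ is a homeomorphism, which being a local isometry is an isometry $\widetilde{\mathcal O}\cong S^n$. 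This establishes at once that $\mathcal O$ is \emph{good}, since its universal orbifold cover is the manifold $S^n$.

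Finally, transporting the deck action through the isometry $D$ and using the equivariance, the deck group acts on $S^n$ as the holonomy image $G:=\rho(\pi_1^{\mathrm{orb}}(\mathcal O))<\O(n+1)$, and $\mathcal O$ is isometric to $S^n/G$. Since the deck transformations act properly discontinuously on $\widetilde{\mathcal O}\cong S^n$, so does $G$ on $S^n$, and applying the definition of proper discontinuity to the compact set $K=S^n$ forces $G$ to be finite. I expect the main obstacle to be the completeness theorem, and specifically its orbifold refinement: one must continue $D$ and lift geodesics across the \emph{singular locus}, checking that the local models $\widetilde U_i/\Gamma_i$ fit together so that $D$ stays a local isometry and a covering there. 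The manifold case is classical (Cartan--Ambrose), but the presence of nontrivial local groups $\Gamma_i$ requires care. (For $n=1$ the sphere $S^1$ is not simply connected, so the simple-connectedness step fails and the statement is instead verified directly on the short list of closed $1$-orbifolds.)
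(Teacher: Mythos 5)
Your argument is correct and is essentially the same as the one the paper relies on: the paper gives no proof of its own but cites Ratcliffe's Theorem 13.3.10, whose proof is precisely this developing-map/holonomy argument (compactness gives completeness, completeness makes the developing map a covering onto the simply connected model $S^n$, so the universal orbifold cover is $S^n$ and the holonomy image is a finite subgroup of $\O(n+1)$). Your identification of the orbifold refinement of the completeness theorem as the delicate point, and the $n=1$ caveat, are both apt.
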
 
If the spherical orbifold is orientable, then $G$ is a subgroup of $\SO(n+1)$.  The following theorem of de Rham is a rigidity result for spherical orbifolds:

\begin{Theorem}[{\cite{derham,MR520507}}]  \label{derham thm}
If two closed spherical orbifolds  are diffeomorphic, then they are isometric. 
\end{Theorem}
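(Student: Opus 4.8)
The plan is to reduce the statement to a conjugacy question for finite subgroups of $\O(n+1)$ and then to upgrade a diffeomorphism to an isometry by an averaging argument. By Theorem~\ref{thm: compact spherical good} the two given orbifolds are isometric to global quotients $\mathcal O_1=S^n/G_1$ and $\mathcal O_2=S^n/G_2$, with $G_1,G_2<\O(n+1)$ finite. Since any $A\in\O(n+1)$ with $AG_1A^{-1}=G_2$ visibly descends to an isometry $S^n/G_1\to S^n/G_2$, it suffices to prove that a diffeomorphism $f\colon\mathcal O_1\to\mathcal O_2$ forces $G_1$ and $G_2$ to be conjugate inside $\O(n+1)$.

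First I would lift $f$. As $S^n$ is simply connected it is the universal orbifold cover of both $\mathcal O_i$, with $G_i$ the corresponding deck group; a diffeomorphism of orbifolds respects the singular strata, hence lifts to a diffeomorphism $\tilde f\colon S^n\to S^n$ intertwining the two actions, that is $\tilde f\,G_1\,\tilde f^{-1}=G_2$. Writing $g_0$ for the round metric, the pullback $g_1:=\tilde f^{\,*}g_0$ is again a metric of constant curvature $1$, and it is $G_1$-invariant: for $\gamma\in G_1$, setting $\gamma':=\tilde f\gamma\tilde f^{-1}\in G_2$, one has $\gamma^*g_1=(\tilde f\gamma)^*g_0=(\gamma'\tilde f)^*g_0=\tilde f^{\,*}\bigl((\gamma')^*g_0\bigr)=\tilde f^{\,*}g_0=g_1$, the penultimate equality using that $\gamma'\in G_2<\O(n+1)$ preserves $g_0$. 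Thus $g_0$ and $g_1$ are two $G_1$-invariant metrics of constant curvature $1$ on $S^n$, for both of which $G_1$ acts isometrically.

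The crux is then an \emph{equivariant} rigidity statement: two $G_1$-invariant round metrics on $S^n$ are related by a $G_1$-equivariant isometry, or equivalently there exists $A\in\O(n+1)$ with $AG_1A^{-1}=G_2$. The non-equivariant comparison is immediate, since every constant curvature $1$ metric is isometric to $g_0$; producing the isometry so that it commutes with the finite group is exactly where the difficulty lies, because a priori $\tilde f$ conjugates $G_1$ to $G_2$ only through a diffeomorphism and not through an orthogonal map. To circumvent this I would build the comparison map canonically, forcing equivariance. Two natural routes suggest themselves: either connect $g_0$ and $g_1$ by a path of $G_1$-invariant constant curvature $1$ metrics and integrate the associated time-dependent vector field after averaging it over the finite group $G_1$ (an equivariant isotopy argument of Moser type, the averaging being harmless as $G_1$ is finite); or deform $\tilde f$ canonically to an isometry by a $G_1$-natural procedure (such as a center-of-mass construction or a harmonic/heat flow), whose naturality guarantees it commutes with $G_1$. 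Either way one obtains a $G_1$-equivariant isometry $\Phi\colon(S^n,g_0)\to(S^n,g_1)$.

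Granting this, the conclusion is a short computation. Since $\tilde f\colon(S^n,g_1)\to(S^n,g_0)$ and $\Phi\colon(S^n,g_0)\to(S^n,g_1)$ are both isometries, the composition $A:=\tilde f\circ\Phi$ lies in $\Isom(S^n,g_0)=\O(n+1)$; and using that $\Phi$ commutes with the $G_1$-action, so $\Phi\,G_1\,\Phi^{-1}=G_1$, together with $\tilde f\,G_1\,\tilde f^{-1}=G_2$, we get $A\,G_1\,A^{-1}=\tilde f\,\Phi\,G_1\,\Phi^{-1}\,\tilde f^{-1}=\tilde f\,G_1\,\tilde f^{-1}=G_2$. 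Hence $A$ is an orthogonal conjugacy carrying $G_1$ to $G_2$ and descends to the desired isometry $\mathcal O_1\to\mathcal O_2$. The main obstacle is entirely concentrated in the equivariant rigidity step; everything else is formal covering theory and the averaging of the metric.
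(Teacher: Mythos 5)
Your reduction is set up correctly, and in fact it mirrors the way the paper uses this theorem: by Theorem \ref{thm: compact spherical good} everything comes down to showing that if $S^n/G_1$ and $S^n/G_2$ are diffeomorphic then $G_1$ and $G_2$ are conjugate in $\O(n+1)$. The lifting step is fine, and your observation that $g_1=\tilde f^{\,*}g_0$ is automatically a $G_1$-invariant constant curvature metric is correct. But the ``crux'' you isolate --- that two $G_1$-invariant round metrics on $S^n$ are related by a $G_1$-equivariant isometry --- is not a lemma you can defer to a standard technique: it \emph{is} the theorem, and neither of your two proposed routes closes it. For the Moser route, you would need a path of $G_1$-invariant constant curvature $1$ metrics joining $g_0$ to $g_1$; the space of such metrics is identified with $\{h\in\mathrm{Diff}(S^n): hG_1h^{-1}<\O(n+1)\}$ modulo postcomposition by $\O(n+1)$, and the statement that $g_0$ and $g_1$ lie in the same path component is equivalent to the conjugacy you are trying to prove. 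For the second route, center-of-mass and harmonic/heat-flow constructions do not canonically deform an arbitrary diffeomorphism of $S^n$ to an isometry (the target is positively curved, and the map is in no sense close to an isometry), so there is no ``natural procedure'' to invoke.

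A sanity check that no soft averaging argument can work: your scheme, if valid, would give an elementary proof that non-$\O(4)$-conjugate cyclic groups yield non-diffeomorphic lens spaces, e.g.\ that $L(7,1)\not\cong L(7,2)$, which is known to require Reidemeister--Franz torsion. The paper does not prove this theorem at all; it cites it as a black box from \cite{derham} (free actions, i.e.\ spherical space forms) and \cite{MR520507} (Rothenberg's extension to non-free actions, i.e.\ genuine orbifolds), and both of those proofs are built on torsion invariants, a fundamentally different and deeper tool than anything in your outline. So the formal bookkeeping in your first and last paragraphs is right, but the middle paragraph contains an unproved assertion that carries the entire content of the result.
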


If we consider two closed spherical orbifolds of the form (by Theorem \ref{thm: compact spherical good}) $\mathcal O=S^n/G$ and 
$\mathcal O'=S^n/G'$, then $\mathcal O$ and $\mathcal O'$ are isometric if and only if $G$ and $G'$ are conjugate in $\O(n+1)$. If moreover $\mathcal O$ and $\mathcal O'$ are orientable (i.e. $G,G'<\SO(n+1)$) and endowed with the orientation induced by the orientation of $S^n$, then $\mathcal O$ and $\mathcal O'$ have an orientation-preserving isometry if and only if $G$ and $G'$ are conjugate in $\SO(n+1)$.

Hence the classification of closed spherical orientable 3-orbifolds up to orientation-preserving diffeomorphisms amounts algebraically to the classification of finite subgroups of $\SO(4)$ up to conjugacy, which is the content of Section \ref{sec: algebraic}.




\subsection{Two and three-dimensional orbifolds}\label{subsec:orbifolds}
Let us start by considering orbifolds of dimension 2. The underlying topological space turns out to be a manifold with boundary. In fact a neighborhood of any point $x$ is modelled on $D^2/\Gamma$ where $\Gamma$ can be (see Figure~\ref{lm2o}):
\begin{itemize}
\item The trivial group, if $x$ is a regular point;
\item A cyclic group of rotations (in this case $x$ is called  \emph{cone point} and is labelled with the order of $\Gamma$);
\item A group of order 2 generated by a reflection ($x$ is called  \emph{mirror reflector} and is a boundary point of the underlying 2-manifold);
\item A dihedral group ($x$ is called \emph{corner reflector}, is still a boundary point for the underlying manifold and is labelled with the order of the rotation subgroup of $\Gamma$).
\end{itemize}

\begin{figure}[htbp]
\centering
\begin{minipage}[c]{.3\textwidth}
\centering
\includegraphics[width=.7\textwidth]{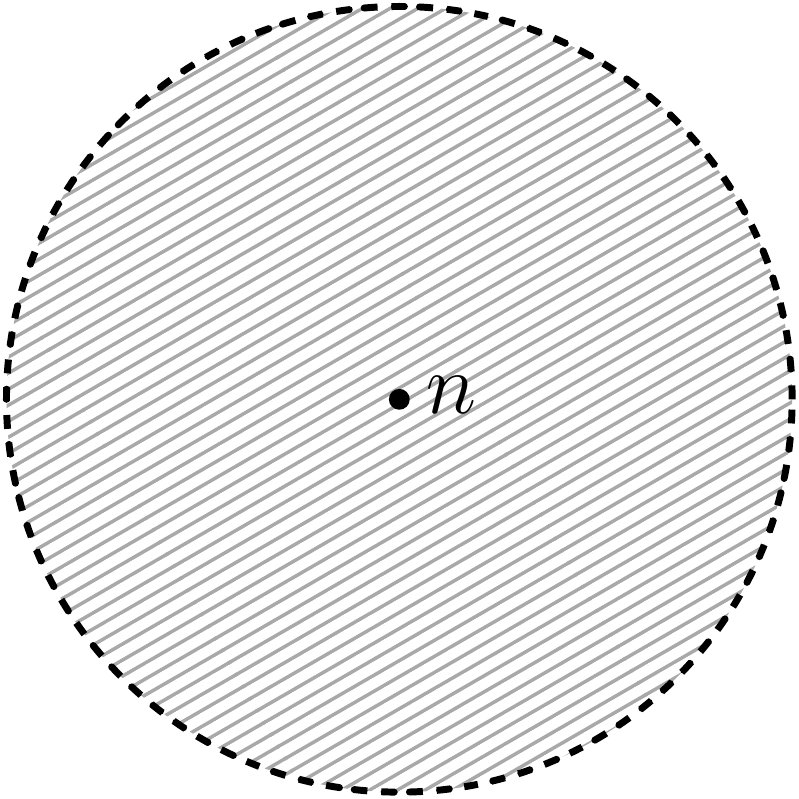} 
\end{minipage}%
\hspace{5mm}
\begin{minipage}[c]{.3\textwidth}
\centering
\includegraphics[width=0.8\textwidth]{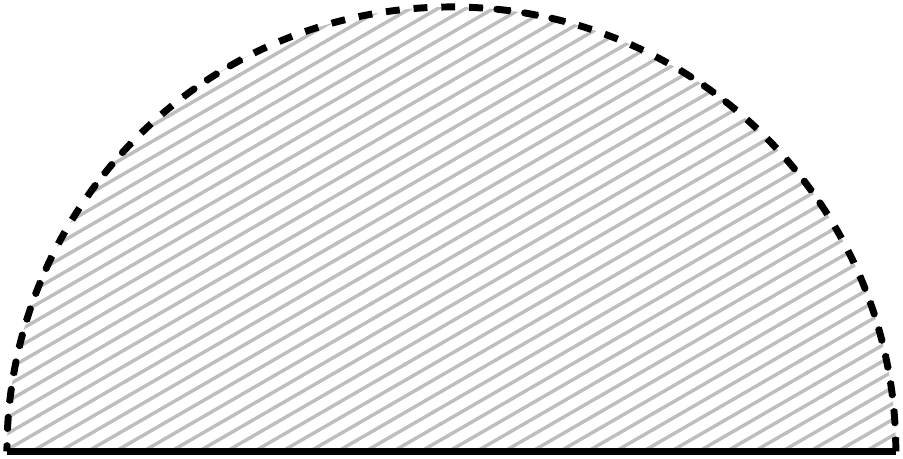} 
\end{minipage}%
\hspace{5mm}
\begin{minipage}[c]{.3\textwidth}
\centering
\includegraphics[width=.75\textwidth]{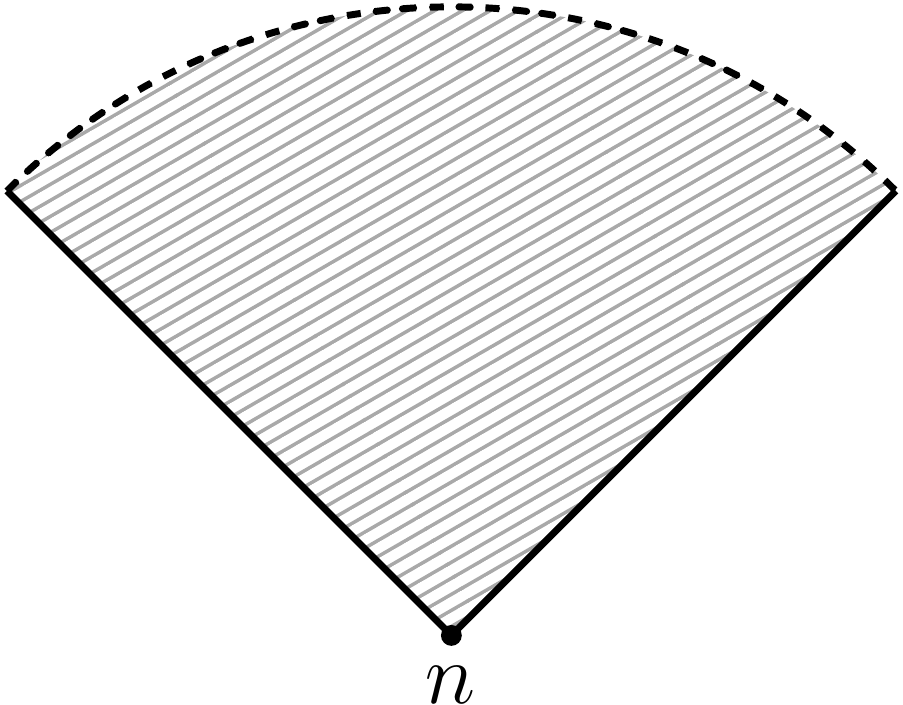} 
\end{minipage}
\caption{Local models of 2-orbifolds. From left to right, a cone point, a mirror reflector and a corner reflector.}\label{lm2o}
\end{figure}

If $\mathcal O$ is closed, its diffeomorphism type is denoted by 
 $X(n_1,\dots,n_k;m_1,\dots,m_h)$, where $X$ is the underlying manifold with boundary, $n_1,\ldots,n_k$ are the labels  of  cone points and $m_1,\ldots,m_h$ are the labels of  corner reflectors. (Labels are also called \emph{singularity indices}.) 

The \emph{Euler characteristic} of  $\mathcal O$ is then defined as:
 $$\chi(\mathcal O):=\chi(X)-\sum_{i}\left(1-\frac{1}{n_i}\right)-\frac{1}{2}\sum_{j}\left(1-\frac{1}{m_j}\right)~$$
where $\chi(X)$ is the Euler characteristic  of the underlying manifold $X.$

As a consequence of the discussion of Section \ref{sec:defi orbi}, any closed spherical 2-orbifold is diffeomorphic to a quotient of $S^2$ by a finite subgroup $G$ of $\O(3)$, and moreover the conjugacy class of $G$ determines both the diffeomorphism and isometry type of $S^2/G$. Starting with the orientable case, the following classical result classifies finite subgroups of $\SO(3)$:

\begin{Lemma} \label{lemma finite subgroups SO3}
A finite subgroup of $\SO(3)$ is either a cyclic group, a dihedral group, or the tetrahedral, octahedral or icosahedral group.
\end{Lemma}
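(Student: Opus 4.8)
The plan is to run the classical ``counting poles'' argument based on the action of $G$ on the fixed points of its rotations. Let $G<\SO(3)$ be finite of order $n$; I may assume $n>1$. Every nontrivial element of $\SO(3)$ is a rotation and fixes exactly two antipodal points of $S^2$, which I call its \emph{poles}. Let $P\subset S^2$ be the set of all poles arising from nontrivial elements of $G$; then $G$ acts on $P$. First I would count in two ways the number of pairs $(g,p)$ with $g\in G\setminus\{e\}$ and $g(p)=p$. Counting by $g$ gives exactly $2(n-1)$, since each nontrivial element contributes its two poles. Counting by $p$ gives $\sum_{p\in P}(|G_p|-1)$, where $G_p$ denotes the stabilizer of $p$.

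Next I would organize the poles into $G$-orbits $O_1,\dots,O_r$. By the orbit--stabilizer theorem, poles in a single orbit have conjugate (hence equal-order) stabilizers, say of order $n_i$ on $O_i$, and $|O_i|=n/n_i$. Equating the two counts and dividing by $n$ yields the fundamental identity
\begin{equation*}
2\left(1-\frac{1}{n}\right)=\sum_{i=1}^{r}\left(1-\frac{1}{n_i}\right).
\end{equation*}
Since each $n_i\ge 2$, every summand lies in $[1/2,1)$, while the left-hand side lies in $[1,2)$; this forces $r\in\{2,3\}$.

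Then I would solve the resulting Diophantine problem. For $r=2$ the identity reduces to $1/n_1+1/n_2=2/n$, which together with $n_i\le n$ forces $n_1=n_2=n$, so there are exactly two poles fixed by all of $G$, and $G$ is a group of rotations about a single axis, hence cyclic. For $r=3$, ordering $n_1\le n_2\le n_3$, the identity becomes $1/n_1+1/n_2+1/n_3=1+2/n>1$, which forces $n_1=2$ and then $n_2\in\{2,3\}$. A short case analysis gives exactly the solutions $(2,2,n/2)$ with $n$ even (the dihedral case) together with $(2,3,3)$, $(2,3,4)$, $(2,3,5)$ with $n=12,24,60$ respectively.

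Finally, I would identify each surviving case with the claimed group. The cyclic and dihedral cases follow directly from the pole structure: in the dihedral case the orbit with stabilizer order $n/2$ consists of two poles, giving an invariant axis about which a cyclic subgroup $C_{n/2}$ rotates, while the order-$2$ stabilizers of the remaining poles provide the half-turns that assemble $G$ into a dihedral group. The three exceptional cases require reconstructing a rigid configuration from the orbits of poles, and I expect this to be the main obstacle. Concretely, for $n=12,24,60$ the smallest pole orbit (of size $n/n_3=4,6,12$) spans $\R^3$ and $G$ acts faithfully on it; one must argue that this orbit forms the vertex set of a regular tetrahedron, octahedron, or icosahedron, whence $G$ is realized inside the rotational symmetry group of that solid, and a comparison of orders shows equality. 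Identifying the resulting groups abstractly as the tetrahedral, octahedral, and icosahedral groups then completes the classification.
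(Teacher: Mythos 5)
The paper does not prove this lemma: it is stated as a classical fact (with the classification then used to enumerate the spherical $2$-orbifolds), so there is no internal argument to compare yours against. Your counting-of-poles proof is the standard classical one and the combinatorial core is correct and complete: the double count of incidences $(g,p)$ gives $2(1-1/n)=\sum_i(1-1/n_i)$, the bounds $1/2\le 1-1/n_i<1\le 2(1-1/n)<2$ force $r\in\{2,3\}$, and your case analysis correctly yields the solutions $(n,n)$, $(2,2,n/2)$, $(2,3,3)$, $(2,3,4)$, $(2,3,5)$. The identifications in the cyclic and dihedral cases are also sound: two singleton orbits give a common axis, hence a cyclic group; in the dihedral case the size-two orbit gives an invariant axis whose pointwise stabilizer is cyclic of index at most $2$, and the remaining elements reverse the axis, hence are half-turns.

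The one place where your write-up is a sketch rather than a proof is the step you yourself flag: showing that in the three exceptional cases the smallest pole orbit is the vertex set of a regular solid. This is not a wrong approach, only an unfinished one, and it closes in the standard way. For $(2,3,3)$ the orbit $O_3$ has $4$ points, each with a cyclic stabilizer of order $3$; the rotation of order $3$ about a point $p\in O_3$ permutes the other three points cyclically, so they are equidistant from $p$ and from one another, and running this over all four points shows all pairwise distances agree, i.e.\ the orbit is a regular tetrahedron; then $G$ embeds in its rotation group, which has order $12=|G|$, forcing equality. The cases $(2,3,4)$ and $(2,3,5)$ are handled identically using the orbits of size $6$ and $12$ (one first observes the size-$6$ orbit consists of three antipodal pairs, and for the icosahedral case one uses the order-$5$ stabilizers to identify the five nearest neighbours of each vertex). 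With that paragraph added your argument is a complete and correct proof of the lemma.
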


It follows that closed orientable spherical 2-orbifolds are (up to diffeomorphism):
\begin{equation}\label{X>0 1}
 S^2, S^2(p,p), S^2(2,2,p), S^2(2,3,3), S^2(2,3,4), S^2(2,3,5) \quad\text{for }p\geq 2~.
 \end{equation} 
 For non-orientable spherical orbifolds, it suffices to consider order 2 quotients of orientable ones, thus getting the list: 
 \begin{gather}
  D^2, D^2(p;), D^2(;p,p), D^2(2;p), D^2(;2,2,p), D^2(3;2), D^2(;2,3,3), D^2(;2,3,4), D^2(;2,3,5),\nonumber \notag \\
  \R P^2, \R P^2(p) \quad\text{for }p\geq 2~. \label{X>0 2}
  \end{gather} 
These are in fact all good two-dimensional orbifolds of positive Euler characteristic. The additional bad orbifolds with $\chi(\mathcal O)>0$ are:
\begin{equation}\label{X>0 3}
S^2(p,q) \text{ and } D^2(p,q) \quad\text{for }p\neq q~.
\end{equation} 

\medskip

Let us now move on to dimension three. In this paper we only consider orientable 3-orbifolds. By a standard argument, any point $x$ admits a local chart of the form $D^3/\Gamma$ for $\Gamma$ a finite subgroup of $\SO(3)$, and the local model is thus the cone over one of the spherical orientable 2-orbifolds listed above, listed in \eqref{X>0 1}. It follows that the underlying topological space  is a manifold and the singular set is a trivalent graph; the local group is cyclic in the complement of the vertices of the graph, and the edges are thus labelled with a singularity index which is the order of the cyclic group.

\begin{figure}[htbp]
\centering
\begin{minipage}[c]{.2\textwidth}
\centering
\includegraphics[height=3cm]{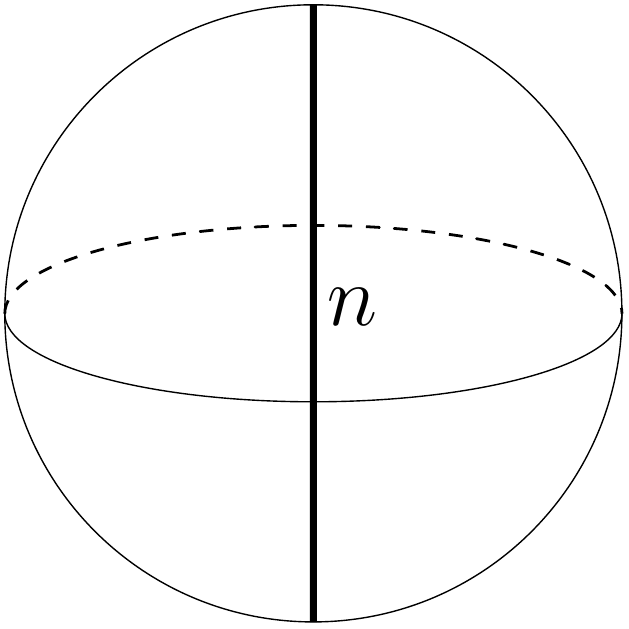} 
\end{minipage}%
\begin{minipage}[c]{.2\textwidth}
\centering
\includegraphics[height=3cm]{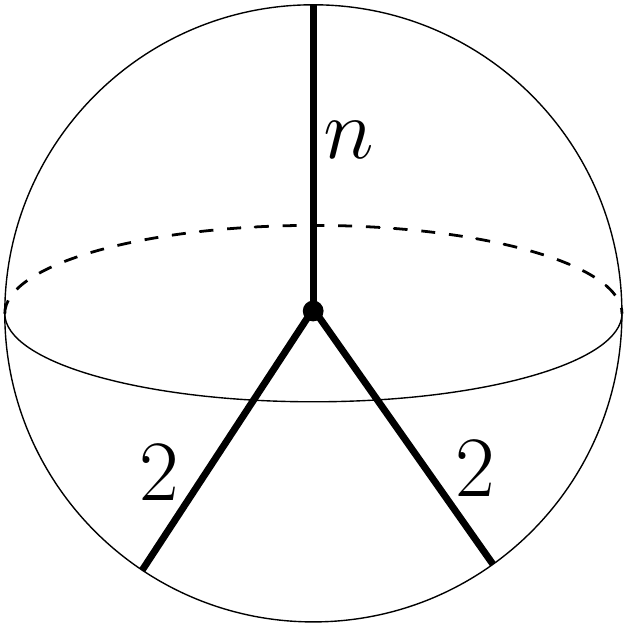} 
\end{minipage}%
\begin{minipage}[c]{.2\textwidth}
\centering
\includegraphics[height=3cm]{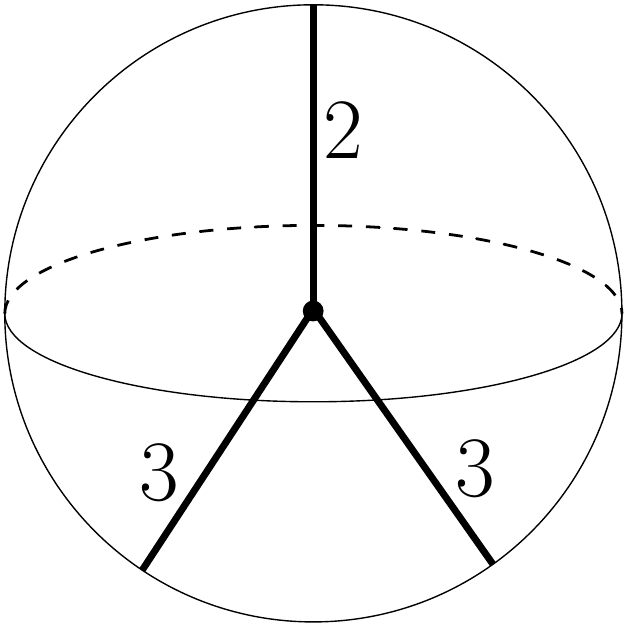} 
\end{minipage}%
\begin{minipage}[c]{.2\textwidth}
\centering
\includegraphics[height=3cm]{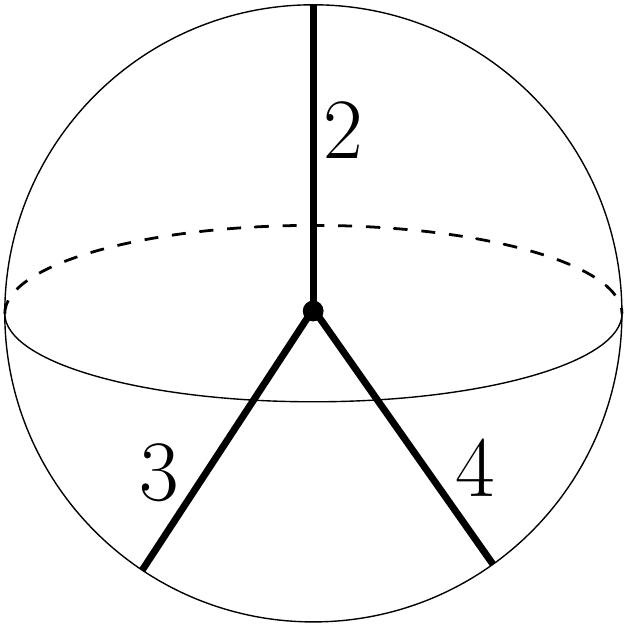} 
\end{minipage}%
\begin{minipage}[c]{.2\textwidth}
\centering
\includegraphics[height=3cm]{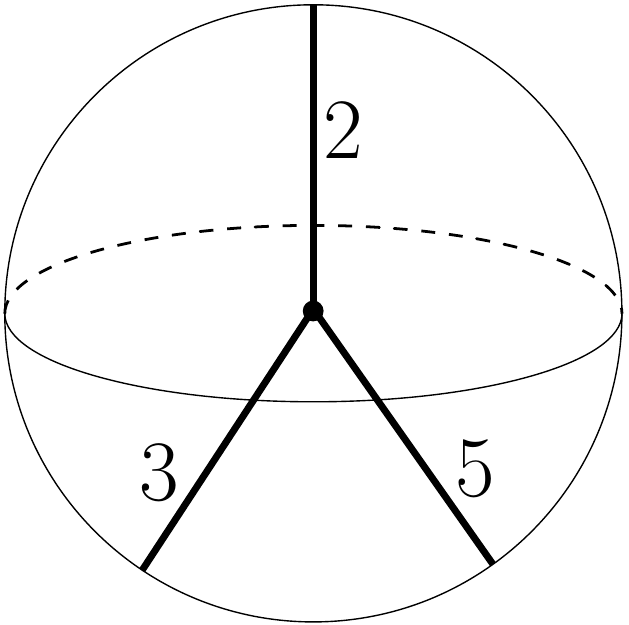} 
\end{minipage}%
\caption{Local models of orientable 3-orbifolds.}\label{lm3o}
\end{figure}



\subsection{Seifert fibrations on three-dimensional orbifolds} \label{sec fibrations}
Let us turn our attention to a topological description of the quotient in terms of Seifert fibrations  for orbifolds, which we now define.

\begin{Definition} \label{defi seifert fibration}
Given a three-dimensional orientable orbifold $\mathcal O$, a Seifert fibration is a surjective map $\pi:\mathcal O \rightarrow \mathcal B$ with image a two-dimensional orbifold  $\mathcal B$, such that for every point $x\in\mathcal  B$ there exist:
\begin{itemize}
\item An orbifold chart $\varphi:U\cong \widetilde U/\Gamma$ for $\mathcal B$ around $x$;
\item An action of $\Gamma$ on $S^1$;
\item An orbifold diffeomorphism $\phi:\pi^{-1}(U)\to (\widetilde U\times S^1)/\Gamma$, where $\Gamma$ acts diagonally on $\widetilde U\times S^1$ by preserving the orientation;
\end{itemize}
such that the following diagram

\[
\xymatrix{
\pi^{-1}(U) \ar[d]_-{\pi} \ar[r]^-{\phi} & (\widetilde{U}\times S^1)/ \Gamma  \ar[d] & \widetilde{U}\times S^1 \ar[l] \ar[d] \\
  U \ar[r]^-{\varphi} & \widetilde{U}/ \Gamma  &  \widetilde{U}  \ar[l]
  }
\] 
is commutative, with the obvious maps on  unspecified arrows.
\end{Definition}

Since the action of $\Gamma$ is required to preserve the orientation (as a consequence of the assumption that $\mathcal O$ is supposed orientable) each element of $\Gamma$ may  either preserve both the orientation of $\widetilde U$ and $S^1$, or reverse both orientations.
Observe that each fiber $\pi^{-1}(x)$ is topologically either a simple closed curve or an interval. A fiber which projects to a regular point of $\mathcal B$ is called \emph{generic}; it is called \emph{exceptional} otherwise.

Let us now consider the local models for oriented Seifert fibered orbifolds. More details can be found in  \cite{bonahon-siebenmann} or \cite{dunbar2}.

\begin{itemize}
\item If the fiber $\pi^{-1}(x)$ is generic, one can pick $\Gamma$ the trivial group in Definition \ref{defi seifert fibration}, hence $\pi^{-1}(x)$ has a tubular neighborhood with a trivial fibration. 
\item
If $x$ is a cone point labelled by $b$, the local group $\Gamma$ is a cyclic group of order $b$ acting by rotations on $\widetilde U$ and thus it needs to act on $S^1$ by rotations too. Hence $\pi^{-1}(x)$ has a fibered neighborhood which is a solid torus, fibered in the usual sense of Seifert fibrations for manifolds, except that the central fiber might be singular. The \emph{local invariant} of $\pi^{-1}(x)$ is defined as the ratio $a/b\in\mathbb{Q}/\mathbb{Z}$, where a generator of $\Gamma$ acts on $\widetilde U$ by rotation of an angle ${2\pi}/{b}$ and on $S^1$ by rotation of $-{2\pi a}/{b}$. Up to adding integer multiples of $b$, one can in fact choose $a$ so that $a/b\in[0,1)$.
The  index of singularity $\pi^{-1}(x)$ is thus $\gcd(a,b)$ (meaning that  points with index of singularity 1 are regular). 
{See Figure \ref{fig:seifert case 1}.}

{It is worth remarking that three qualitatively different situations may occur here:
\begin{enumerate}
\item If $a\equiv 0\,\mod\,b$, then $\pi^{-1}(x)$ is a circle in the singular set of the orbifold $\mathcal O$, with local group $\mathbb{Z}_b$ associated to each point. Forgetting the singular locus, in the underlying manifold $\pi^{-1}(x)$ has a tubular neighborhood endowed with a trivial fibration.
\item If $a$ and $b$ are relatively prime, then the action of the cyclic group is free: in this case $\pi^{-1}(x)$ has a neighborhood consisting only of regular points, fibered (non-trivially) in the usual  sense of Seifert fibrations for manifolds.
\item Finally, if $a$ and $b$  are not relatively prime and $a\not\equiv 0\,\mod\,b$, then $\pi^{-1}(x)$ is contained in the singular set, with index $\mathrm{gcd}(a,b)\neq 1$, and at the same time its tubular neighborhood is non-trivially fibered at the level of underlying Seifert manifold.
\end{enumerate}
}

\begin{figure}[htbp]
\centering
\begin{minipage}[c]{.5\textwidth}
\centering
\includegraphics[width=.7\textwidth]{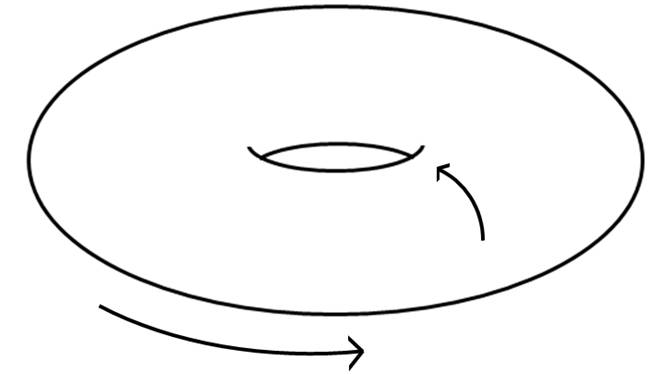} 
\end{minipage}%
\begin{minipage}[c]{.5\textwidth}
\centering
\includegraphics[width=.9\textwidth]{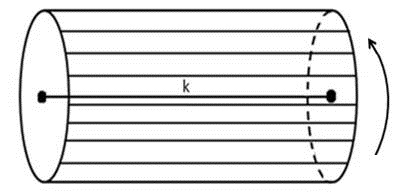} 
\end{minipage}%
\caption{The preimage of a cone point in the base orbifold. On the left, the cyclic action on a solid torus $D^2\times S^1$ generated by a simultaneous rotation of angle ${2\pi}/{b}$ on $D^2$ and of angle $-{2\pi a}/{b}$ on $S^1$. On the right, the quotient is identified to a solid cylinder (a fundamental domain for the previous action) where top and bottom are glued by a rotation, and the central fibre has singularity index $k=\mathrm{gcd}(a,b)$.}\label{fig:seifert case 1}
\end{figure}

\item
If $x$ is a mirror reflector, thus with local group $\mathbb{Z}_2$ whose generator acts by reflection on $\widetilde U$ and thus also on $S^1$, the local model is topologically a 3-ball. The fiber $\pi^{-1}(x)$ is an interval. The endpoints of all the fibers $\pi^{-1}(x)$, as $x$ varies in the mirror reflector, form two disjoint singular arcs of index 2.  {See Figure \ref{fig:seifert case 2} with $k=1$.}

\item
If $x$ is a corner reflector, namely $\Gamma$ is a dihedral group, by a similar argument the non-central involutions in $\Gamma$ act by simultaneous reflection both on $\widetilde U$ and on $S^1$. The local model is again a topological 3-ball (called solid pillow) with some singular set inside. The fiber  $\pi^{-1}(x)$ is again an interval and might be singular in this case, while the preimages of the nearby mirror reflectors are intervals as in the above case. The local invariant associated to $\pi^{-1}(x)$ is defined as the local invariant of the cyclic index two subgroup, and the singularity index is $\gcd(a,b)$, {see Figure \ref{fig:seifert case 2} again.}
\end{itemize}

\begin{figure}[htbp]
\centering
\begin{minipage}[c]{.6\textwidth}
\centering
\includegraphics[width=.8\textwidth]{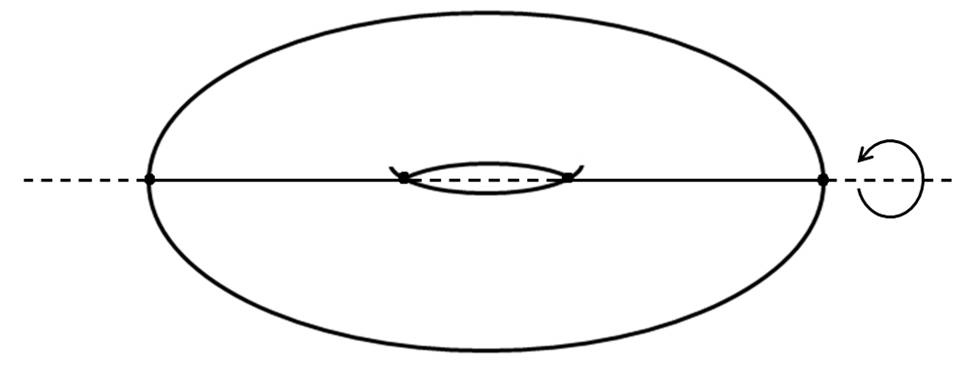} 
\end{minipage}%
\begin{minipage}[c]{.4\textwidth}
\centering
\includegraphics[width=.65\textwidth]{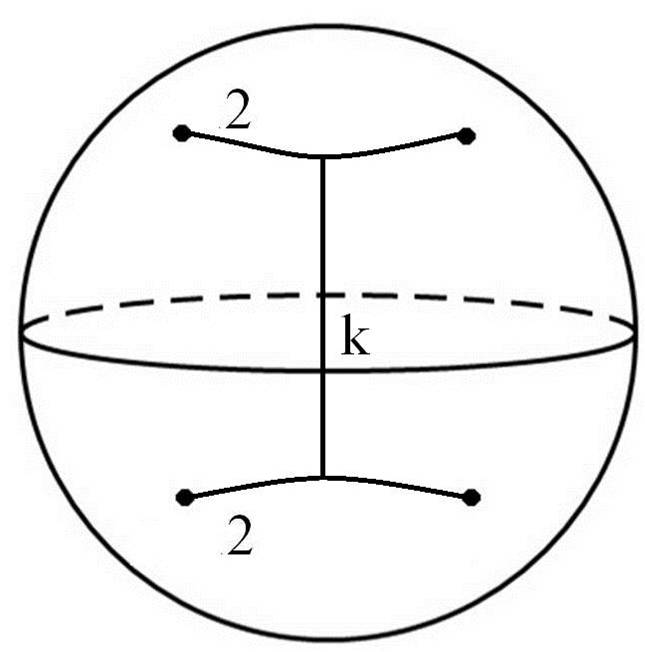} 
\end{minipage}%
\caption{On the left, the involution acting on a solid torus by a simultaneous reflection. On the right, a solid pillow, namely the preimage of a neigbourhood of a mirror reflector ($k=1$) or a corner reflector ($k=\gcd(a,b)$) in the base orbifold.}\label{fig:seifert case 2}
\end{figure}




Together with the base orbifold and the local invariants, to a Seifert fibered orbifold is associated the Euler class $e$ and an additional invariant $\xi\in\{0,1\}$ for each boundary component of the underlying manifold of the base orbifold. These invariants satisfy the relation:

\begin{equation}\label{somma eulero invarianti}  
e+\sum_i\frac{a_i}{b_i}+\frac{1}{2}\left(\sum_j\frac{a'_{j}}{b'_{j}}+\sum_k\xi_k\right)\equiv 0\;\textnormal{mod}\,1~,
\end{equation}
where the first sum involves local invariants associated to all cone points, the second sum is taken over mirror reflectors, and the third sum is taken over boundary components. 

The base orbifold together the local  invariants, the Euler class and the boundary components invariants  determine the Seifert fibered  orbifold up to  orientation-preserving and fibration-preserving  diffeomorphism.

\begin{Remark} \label{rmk:change orientation}
Let us observe that changing the orientation of the orbifold inverts the signs of local invariants and  of the Euler number.
\end{Remark}

{To denote a specific fibered Seifert 3-orbifold we use  the following compact notation:
\begin{itemize}
\item If the base orbifold is a 2-manifold or a 2-orbifold admitting only cone points,   $ (X;a_1/b_1,\dots,a_n/b_n; e)$ is the Seifert fibered 3-orbifold with base orbifold $X$,   local invariants of the cone points $a_i/b_i$ and Euler number $e$. When $X$ is a manifold we write  simply $(X;\, ; e)$
\item If the base orbifold admits  also mirror reflectors and possibly  corner points, then the Seifert fibered 3-orbifold $ (X;a_1/b_1,\dots,a_n/b_n;a'_1/b'_1,\dots,a'_m/b'_m; e; \xi_1,\dots\xi_a)$ has  base orbifold $X$,  local invariants of  cone points $a_i/b_i$,  local invariants of  corner  points $a'_j/b'_j$, Euler number $e$ and boundary components invariants $\xi_k$. If the singular set of $X$ does not contain  any cone point (resp. corner reflector)  we write $ (X;\, ;a'_1/b'_1,\dots,a'_m/b'_m; e; \xi_1,\dots\xi_a)$, (resp.  $(X;a_1/b_1,\dots,a_n/b_n;\,; e; \xi_1,\dots\xi_a)$)
\end{itemize}}

\subsection{Comparison with the classification of Seifert manifolds} \label{rmk manifold}


As already observed, if $\mathcal O$ is a manifold (meaning that the singular set is empty), then the notion of Seifert fibration according to Definition \ref{defi seifert fibration} coincides with the usual notion of Seifert fibration for manifolds. If $\mathcal O$ is not a manifold but the base orbifold $\mathcal B$ is orientable, which means that the only singular points of $\mathcal B$ are cone points, then the underlying topological space of $\mathcal O$ is a Seifert fibered manifold, whose invariants in the sense of the previous section are simply obtained by ``simplifying the fractions'' $a_i/b_i$. Namely, by replacing the invariants $a_i$ and $b_i$ of each cone point in the base orbifold by $a_i/\gcd(a_i,b_i)$ and $b_i/\gcd(a_i,b_i)$ respectively.

However, the classifying data for Seifert fibrations of manifolds are more standardly defined as the following data:
\begin{equation}\label{eq:data seifert fibration classical}
(X;\alpha_1/\beta_1,\ldots,\alpha_n/\beta_n)~,
\end{equation}
where $X$ is a closed surface (possibly non-orientable) and the pairs $(\alpha_i,\beta_i)$ are relatively prime. Namely, one assumes that the fractions $\alpha_i/\beta_i$ cannot be simplified, but one does \emph{not} take the class of $\alpha_i/\beta_i$ in $\Q/\Z$, or in other words, one cannot assume that $\alpha_i/\beta_i$ is in $[0,1)$ as we did for $a_i/b_i$ in the previous section. The representation of a Seifert fibered manifold by the data as in \eqref{eq:data seifert fibration classical} is not unique: two representations of the same Seifert fibered manifold differ by replacing each $\alpha_i/\beta_i$ by $\alpha_i'/\beta_i'=\alpha_i/\beta_i+k_i$ for $k_i\in\mathbb Z$, under the constraint that $\sum_i k_i=0$. It is not allowed to have $n=0$, that is, no marked point in $X$: in this case one needs to introduce an invariant $\alpha/\beta$ with $\beta=1$. This is necessary in order to determine the Euler class $e$, see below. For instance, this allows to distinguish  between $S^3$ and $S^2\times S^1$ which both have base surface $X=S^2$.

For a Seifert fibered manifold $\mathcal M$, let us now briefly explain how to go from the invariants introduced in Section \ref{sec fibrations} (by interpreting $\mathcal M$ as a Seifert fibered orbifold $\mathcal O$ with empty singular set) to the classical invariants of \eqref{eq:data seifert fibration classical}, and viceversa.

Given \emph{a} representation of $\mathcal M$ as in \eqref{eq:data seifert fibration classical}, the base orbifold $\mathcal B$ has underlying topological space $X$ and $n$ cone points with indices $\beta_1,\ldots,\beta_n$. The local invariants are the classes of the ratios $\alpha_i/\beta_i$ in $\Q/\Z$. Finally, the Euler class equals $e=-\sum_i(\alpha_i/\beta_i)$. Observe that these data do not change if one changes the representation of $\mathcal M$ as explained above.

Conversely, given the Seifert invariants in the sense of orbifolds, clearly $X$ is the topological surface underlying the base orbifold $\mathcal B$. To determine the ratios $\alpha_i/\beta_i\in\Q$, it suffices to pick some representatives of the classes of $a_i/b_i$ in $\Q/\Z$ so that $-\sum_i(\alpha_i/\beta_i)$ coincides with the Euler class.

\subsection{The spherical case} \label{sec seifert s3}

If a Seifert fibered orbifold $\mathcal O$ is geometric, i.e. it admits a metric locally modelled on one of Thurston's eight geometry, then its geometry is detected by the Euler charactersitic of the base orbifold and by the Euler number of the fibration (see \cite[page 71]{Dun2}). In particular for the spherical case we have:
\begin{Proposition} \label{prop determine geometry}
Let $\mathcal O$ be a closed spherical orbifold  and $\pi:\mathcal O\to\mathcal B$ be a Seifert fibration. Then 
\begin{equation}\label{determine geometry}
\chi(\mathcal B)>0\quad\text{ and }\quad e(\pi)\neq 0~.
\end{equation} 
Conversely, every closed Seifert fibered orbifold satisfying the conditions in \eqref{determine geometry} is spherical.
\end{Proposition}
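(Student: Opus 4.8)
The plan is to characterise the spherical geometry among all Seifert fibered geometries by translating both conditions into the finiteness of the orbifold fundamental group $\pi_1^{\mathrm{orb}}(\mathcal O)$, exploiting the central role played by the generic fiber. By Theorem \ref{thm: compact spherical good} a closed spherical orbifold is a global quotient $S^3/G$ with $G$ finite, so $\pi_1^{\mathrm{orb}}(\mathcal O)\cong G$ is finite; conversely, a closed orientable Seifert fibered $3$-orbifold without bad $2$-suborbifold is geometric by \cite[Proposition 2.13]{boileau-maillot-porti}, and among the eight geometries only the spherical one has finite fundamental group. Thus it suffices to show that, for $\pi:\mathcal O\to\mathcal B$ a closed Seifert fibered orbifold, the group $\pi_1^{\mathrm{orb}}(\mathcal O)$ is finite if and only if $\chi(\mathcal B)>0$ and $e(\pi)\neq 0$.

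For the forward implication I would use the exactness of
\[
\langle h\rangle \longrightarrow \pi_1^{\mathrm{orb}}(\mathcal O)\longrightarrow \pi_1^{\mathrm{orb}}(\mathcal B)\longrightarrow 1,
\]
where $h$ is the class of the generic fiber, generating the normal cyclic subgroup that is the kernel of the natural surjection onto $\pi_1^{\mathrm{orb}}(\mathcal B)$. If $\pi_1^{\mathrm{orb}}(\mathcal O)$ is finite, then so is its quotient $\pi_1^{\mathrm{orb}}(\mathcal B)$; since a closed $2$-orbifold has finite orbifold fundamental group exactly when its Euler characteristic is positive (the non-positive cases being Euclidean or hyperbolic, hence with infinite group), this forces $\chi(\mathcal B)>0$. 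To obtain $e(\pi)\neq 0$ I would invoke the fact that the order of $h$ inside $\pi_1^{\mathrm{orb}}(\mathcal O)$ is finite precisely when $e(\pi)\neq 0$: a vanishing Euler number corresponds to a fibration that is virtually a product, forcing $h$ to have infinite order and hence $\pi_1^{\mathrm{orb}}(\mathcal O)$ to be infinite, a contradiction.

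For the converse, assuming $\chi(\mathcal B)>0$ and $e(\pi)\neq 0$, the base group $\pi_1^{\mathrm{orb}}(\mathcal B)$ in the displayed sequence is finite, while $e(\pi)\neq 0$ makes the fiber class $h$ of finite order; an extension of a finite group by a finite cyclic group is finite, so $\pi_1^{\mathrm{orb}}(\mathcal O)$ is finite, whence $\mathcal O$ is spherical by the discussion above. Alternatively, and more directly, both implications can be read off the table in \cite[page 71]{Dun2}, which records that the geometry of a closed geometric Seifert fibered orbifold is determined by the sign of $\chi(\mathcal B)$ together with whether $e(\pi)$ vanishes, the spherical entry being exactly $\chi(\mathcal B)>0$ and $e(\pi)\neq 0$.

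The main obstacle, and the step deserving the most care, is making precise the equivalence between the vanishing of $e(\pi)$ and the order of the fiber class $h$: this amounts to identifying the rational Euler number with the extension class of the central sequence above and checking, via relation \eqref{somma eulero invarianti} and the local invariants, that $h$ has infinite order exactly when $e(\pi)=0$. A secondary technical point is the orbifold setting itself — because of mirror and corner reflectors the generic fiber may be covered by interval fibers, so the fibration sequence and the notion of fiber class must first be set up on the orientable fibration underlying $\mathcal O$ before the finiteness argument applies. Verifying that the converse total space contains no bad $2$-suborbifold, so that geometrization is applicable, is likewise routine and becomes automatic once $\mathcal B$ is known to be a good spherical $2$-orbifold.
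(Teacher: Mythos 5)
Your argument is essentially correct, but it takes a genuinely different route from the paper. The paper offers no self-contained proof at all: the forward implication is obtained by quoting the table on \cite[page 71]{Dun2}, which records that for a geometric Seifert fibered orbifold the geometry is determined by the sign of $\chi(\mathcal B)$ together with the vanishing or not of $e(\pi)$, and the converse is delegated to \cite[Proposition 2.13]{boileau-maillot-porti} and its proof. Your main line of argument instead reproves the criterion from scratch by reducing everything to the finiteness of $\pi_1^{\mathrm{orb}}(\mathcal O)$ via the fibration exact sequence $\langle h\rangle\to\pi_1^{\mathrm{orb}}(\mathcal O)\to\pi_1^{\mathrm{orb}}(\mathcal B)\to 1$. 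What your approach buys is transparency (one sees exactly where each of the two numerical conditions enters) at the cost of having to set up the fiber-class machinery in the orbifold setting, including the reflector issues you flag; what the paper's approach buys is brevity, since both Dunbar's table and the geometrization statement of Boileau--Maillot--Porti already package the entire content of the proposition.

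One caveat deserves emphasis: the ``equivalence'' you single out as the main obstacle, namely that $h$ has infinite order in $\pi_1^{\mathrm{orb}}(\mathcal O)$ exactly when $e(\pi)=0$, is \emph{false} as a biconditional without the standing hypothesis $\chi(\mathcal B)>0$. For a circle bundle over a hyperbolic base with $e\neq 0$ (geometry $\widetilde{\mathrm{SL}_2}$) the fiber class generates an infinite cyclic central subgroup, so $e\neq 0$ does not by itself force $h$ to have finite order. Your two uses of the claim are each fine in context --- in the forward direction you only need ``$e=0$ implies $h$ of infinite order'' (via the virtual product structure), and in the converse you have $\chi(\mathcal B)>0$ available, so that $\mathcal O$ is finitely covered by a Seifert fibration over $S^2$ or a bad spindle with nonzero Euler number, i.e.\ a lens space other than $S^2\times S^1$, whence $h$ has finite order --- but the statement you propose to prove as a clean equivalence should be replaced by these two one-directional claims, each carrying its own hypothesis. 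Finally, note that in the converse $\mathcal B$ itself may be a \emph{bad} $2$-orbifold $S^2(p,q)$ or $D^2(;p,q)$ with $\chi>0$, so the last sentence of your sketch (``once $\mathcal B$ is known to be a good spherical $2$-orbifold'') needs to be adjusted: the absence of bad $2$-suborbifolds in $\mathcal O$ must be argued directly (as in the proof of \cite[Proposition 2.13]{boileau-maillot-porti}) rather than deduced from goodness of the base.
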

For the last part of the statement, see \cite[Proposition 2.13]{boileau-maillot-porti} and its proof. 

 {In \cite{bonahon-siebenmann} the definition of Seifert fibration for orbifolds is given for orbifolds which might be non-orientable, hence in a more general setting with respect to our Definition \ref{defi seifert fibration}. We omit the complete definition in the non-orientable case in this paper. However, the following statement, which is a consequence of Proposition \ref{prop determine geometry}, shows that in the case of spherical orbifolds we can harmlessly reduce to the case of orientable orbifolds.
 \begin{Corollary}\label{cor nonoriented}
 Every closed spherical Seifert fibered orbifold is orientable.
 \end{Corollary}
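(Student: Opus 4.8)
The plan is to argue by contradiction and to reduce the non-orientable case to the orientable one by passing to the orientation double cover, where the hypotheses of Proposition \ref{prop determine geometry} become available. The guiding idea is that an orientation-reversing, fibration-preserving self-diffeomorphism of an oriented Seifert fibered orbifold must force its Euler number to vanish, in contradiction with the sphericity criterion.

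Suppose, then, that $\mathcal O$ is a closed spherical Seifert fibered orbifold which is \emph{not} orientable. By Theorem \ref{thm: compact spherical good} we may write $\mathcal O=S^3/G$ for a finite $G<\O(4)$, and non-orientability means $G\not<\SO(4)$. Setting $\widehat G=G\cap\SO(4)$, which is normal of index two in $G$, the quotient $\widehat{\mathcal O}=S^3/\widehat G$ is a closed \emph{orientable} spherical orbifold, and $\widehat{\mathcal O}\to\mathcal O$ is the associated regular orbifold double cover. Its non-trivial deck transformation $\tau$ comes from an element of $G\setminus\widehat G$, hence is an orientation-reversing diffeomorphism of $\widehat{\mathcal O}$.

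The next step is to lift the Seifert fibration $\pi\colon\mathcal O\to\mathcal B$ to a Seifert fibration $\widehat\pi\colon\widehat{\mathcal O}\to\widehat{\mathcal B}$; this is possible because Seifert fibrations pull back along orbifold coverings. Since $\tau$ covers the identity of $\mathcal O$, it sends fibers of $\widehat\pi$ to fibers, so it is a fibration-preserving diffeomorphism of $\widehat{\mathcal O}$. Viewing $\tau$ as an \emph{orientation-preserving} fibration-preserving diffeomorphism from $\widehat{\mathcal O}$ onto $\widehat{\mathcal O}$ with reversed orientation, and using that the Euler number is an invariant of orientation-preserving fibration-preserving diffeomorphisms together with Remark \ref{rmk:change orientation}, we obtain $e(\widehat\pi)=-e(\widehat\pi)$, whence $e(\widehat\pi)=0$. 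But $\widehat{\mathcal O}$ is closed, orientable and spherical, so Proposition \ref{prop determine geometry} forces $e(\widehat\pi)\neq 0$. This contradiction shows that $\mathcal O$ must be orientable.

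The main point to handle with care is the appearance and well-definedness of the Euler number: as defined in Subsection \ref{sec fibrations}, it requires an orientation of the total space, which a non-orientable $\mathcal O$ does not carry, and this is precisely what the passage to $\widehat{\mathcal O}$ circumvents. The other step deserving attention is the sign computation encoded in Remark \ref{rmk:change orientation}: one should check that reversing the orientation of $\widehat{\mathcal O}$ negates $e$ regardless of whether $\tau$ preserves or reverses the orientation of a generic fiber, the two cases being distinguished by the opposite effect on the base orientation, so that the sign of $e$ flips in either situation. Once these are in place, the contradiction with Proposition \ref{prop determine geometry} is immediate.
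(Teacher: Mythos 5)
Your proof is correct and follows essentially the same route as the paper: pass to the orientation double cover, lift the fibration, and use the orientation-reversing deck transformation together with Remark \ref{rmk:change orientation} to force $e=0$, contradicting Proposition \ref{prop determine geometry}. The only cosmetic difference is that you realize the double cover explicitly as $S^3/(G\cap\SO(4))$ via Theorem \ref{thm: compact spherical good}, while the paper works with the abstract orientation double cover directly.
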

 \begin{proof}
Let $\mathcal O$ be a non-orientable orbifold which is Seifert fibered in the sense of \cite{bonahon-siebenmann}. It was proved in \cite{bonahon-siebenmann} that the Seifert fibration can be lifted to its orientation double cover $\widetilde{\mathcal O}$, which is therefore endowed with a Seifert fibration $\pi$. We claim that the Euler class of $\pi$ vanishes. To see this, let $\varphi$ be the generator of the deck transformation group of the orbifold covering $\widetilde{\mathcal O}\to \mathcal O$. That is, $\varphi$ is an (order-two) orientation-reversing self-diffeomorphism of $\widetilde{\mathcal O}$. In other words, if we choose an orientaton $\mathfrak o$ on $\widetilde{\mathcal O}$ and we denote by $\mathfrak o'$ the opposite orientation, then $\varphi:(\widetilde{\mathcal O},\mathfrak o)\to (\widetilde{\mathcal O},\mathfrak o')$ is an orientation-preserving orbifold diffeomorphism. Moreover $\varphi$ preserves the Seifert fibration $\pi$ by construction. Since the Euler class is an invariant of oriented Seifert fibered orbifolds, we have $e_{\mathfrak o}(\pi)=e_{\mathfrak o'}(\pi)$. (Of course $e_{\mathfrak o}(\pi)$ denotes the Euler class of $\pi$ with respect to the orientation $\mathfrak o$.) But by Remark \ref{rmk:change orientation}, $e_{\mathfrak o}(\pi)=-e_{\mathfrak o'}(\pi)$. This implies that 
the Euler class of $\pi$ vanishes. Therefore a Seifert fibered non-orientable orbifold $\mathcal O$ cannot be spherical, for otherwise $\widetilde{\mathcal O}$ would also inherit a spherical structure, and by Proposition \ref{prop determine geometry} the Euler class of $\pi$ should be different from zero.
\end{proof}}

{In the proof of Corollary \ref{cor nonoriented} we have used that, given an orbifold covering $\mathcal O\to\mathcal O'$, any Seifert fibration for $\mathcal O'$ can be lifted to $\mathcal O$. This fact has a second important consequence.
 Namely, in light of Theorem \ref{thm: compact spherical good}, we can reduce our analysis to the study of the fibrations induced on the quotient $S^3/G$ by the Seifert fibrations of $S^3$.}
 Recalling again that, if $\mathcal O$ is a manifold, then Definition \ref{defi seifert fibration} coincides with the usual Seifert fibrations for manifolds,
 Seifert fibrations for $S^3$ are well known (see \cite{seifert} or also \cite[Proposition 5.2]{geigeslange}): they have 
 base orbifold $S^2(u,v)$ for $u,v\geq 1$ two coprime integers, hence only two non-generic fibers  which are non-singular. The local invariants are given by (the classes modulo 1 of)
 $\bar v/u$ and $\bar u/v$ where $u\bar u+v \bar v=1$,  and the Euler class is $\pm 1/uv$.
 
 If $u$ or $v$ equals $1$, we mean that the correponding point in the base orbifold is regular, and hence there is no local invariant to associate (the above formula would indeed give 0 as output). In particular, for $u=v=1$ we obtain the \emph{Hopf fibration}, which is a fiber bundle in the usual sense, since every fiber has a tubular neighborhood which is fibered as a usual product.
  A more concrete description of these fibrations in terms of the geometry of $S^3$ will be provided in Section \ref{sec: Seifert S3 invariant}.

{\subsection{An example of non-uniqueness}\label{subsec:example nonuniqueness}
Let us briefly discuss a concrete example of a spherical orbifold with non-empty singular set which admits multiple fibrations. Let $\mathcal O$ be the orbifold having underlying topological space the 3-sphere and singular set the Hopf link with singularity index $2$ for each component. Then $\mathcal O$ is easily seen to admit a Seifert fibration which is simply the Hopf fibration of the underlying $3$-sphere, and the singular locus consists of two fibers of the Hopf fibration. Such a fibration has base orbifold $S^2(2,2)$, local invariants $0/2$ over each cone point of the base orbifold and Euler class $e=-1$. By Proposition \ref{prop determine geometry}, $\mathcal O$ is spherical. For an illustration of the Hopf fibration, see for instance \cite[Chapter8, Figure 16]{zbMATH05080035} or \cite{hopf1,hopf2}.
}

{One can describe an alternative Seifert fibration of $\mathcal O$ as follows. We consider an annulus, with boundary equal to the union of  two singular circles of index 2,   fibered by intervals and a solid torus containing such annulus  fibered as in Figure \ref{fig:example}, where the fibers of the complement of the  annulus are meridians of the torus. Then we perform a Dehn twist on such solid torus with singular circles and we glue along its boundary another solid torus, identifying the meridian of the former to a longitude of the latter. Taking into account the two singular circles in the first torus, the union of the two tori give $\mathcal O$; moreover if we equip the second torus with the  trivial fibration, the fibrations of the two tori give a Seifert fibration of $\O$ different from that described in the first paragraph.  Such a  Seifert fibration has base orbifold $D^2$ (with no cone and corner points), Euler class $e=-1$, and invariant $\xi=0$ associated to the only boundary component of $D^2$.}

This example is a special case of three-orbifolds whose underlying topological space is $S^3$ and the singular locus is a 2-bridge link with local group of order two. The possible fibrations of such orbifolds are briefly described in Example \ref{ex:2bridge} at the end of the paper.

\begin{figure}[htbp]
\centering
\begin{minipage}[c]{.5\textwidth}
\centering
\includegraphics[width=.8\textwidth]{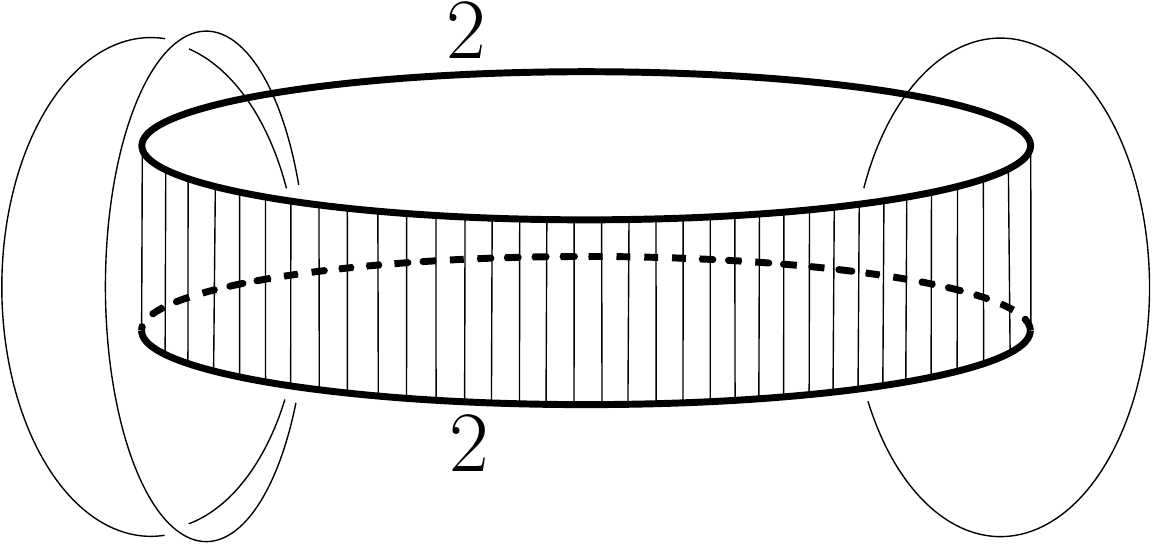} 
\end{minipage}%
\begin{minipage}[c]{.5\textwidth}
\centering
\includegraphics[width=.68\textwidth]{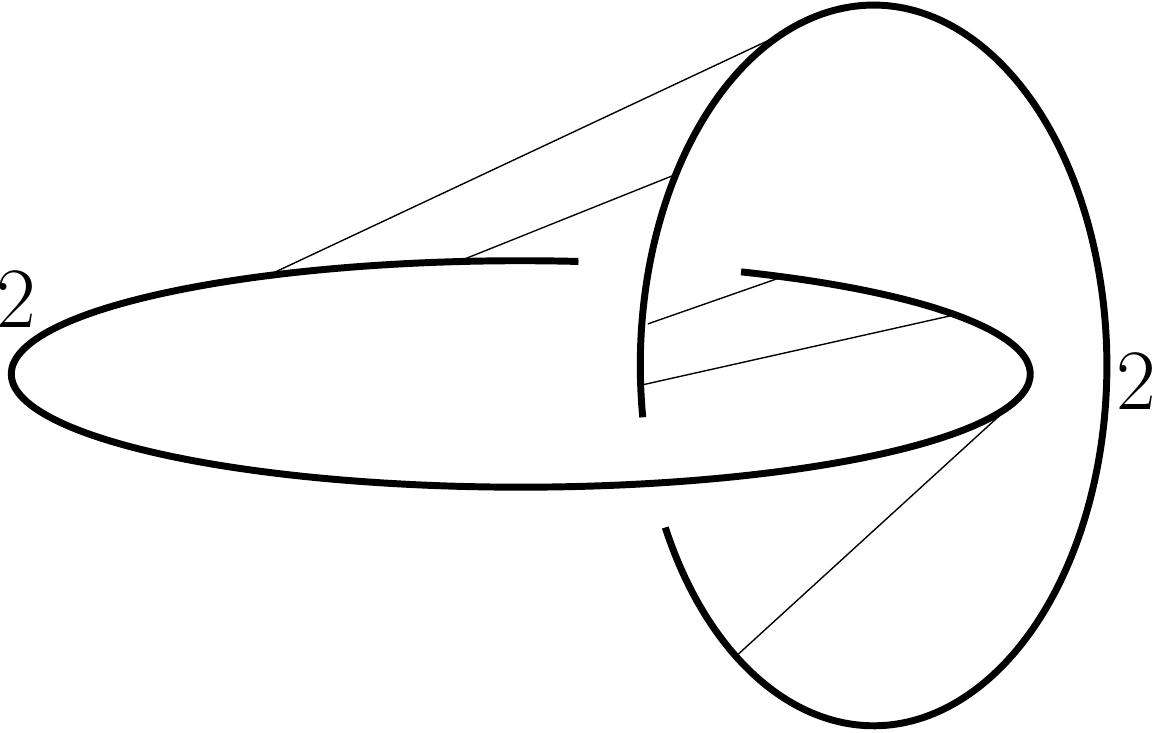} 
\end{minipage}%
\caption{On the left, a Seifert fibration of a solid torus with two singular circles in the interior: an annulus connecting the two singular circles is fibered by intervals, while the complement of such annulus in the solid torus is fibered by meridional nonsingular circles. Performing a Dehn twist on such solid torus, one gets a fibered solid neighbourhood of the Hopf link (on the right). Its complement in the underlying manifold $S^3$ is a trivially fibered solid torus.}\label{fig:example}
\end{figure}

\section{Finite subgroups of $\SO(4)$} \label{sec: algebraic}

In this section we discuss a classification of closed spherical orientable three-orbifolds up to orientation-preserving diffeomorphisms, from a group-theoretical point of view. In Section \ref{sec: Seifert S3 invariant} we will then focus on a classification up to fibration-preserving diffeomorphisms. 
\subsection{Quaternion algebra and subgroups of $S^3$}

As discussed in Section \ref{sec:defi orbi}, two spherical 3-orbifolds $\mathcal O=S^3/G$ and $\mathcal O'=S^3/G'$ are diffeomorphic if and only if they are isometric, and such an isometry can be lifted to an isometry of $S^3$ which conjugates $G$ to $G'$. If the isometry between the orbifolds is orientation-preserving, then the lift to $S^3$ is orientation-preserving. For this reason, the classification of closed spherical orientable three-orbifolds $S^3/G$ up to orientation-preserving diffeomorphisms corresponds to the algebraic classification of finite subgroups of $\SO(4)$ up to conjugation in $\SO(4)$, originally due to Seifert and Threlfall (\cite{threlfall-seifert} and \cite{threlfall-seifert2}), which we shall now briefly recall. For more details, see \cite{duval}, which we essentially follow although it must be mentioned that in Du Val's list of  finite subgroups of $\SO(4)$ there are three missing cases, see also \cite{conway-smith,mecchia-seppi,Mecchia-seppi2}.

Let  us identify $\R^4$ with the quaternion algebra $\H=\{a+bi+cj+dk\,|\,a,b,c,d\in\R\}=\{z_1+z_2j\,|\,z_1,z_2\in\C\}$. Given $q=z_1+z_2 j\in\H$, its conjugate is $\bar q=\bar z_1-z_2 j$. Thus the standard positive definite quadratic form of $\R^4$ is identified to $q\bar q=|z_1|^2+|z_2|^2$.
The three-sphere $S^3$ then corresponds to the set of unit quaternions:
\begin{equation*}
S^3=\{a+bi+cj+dk \,|\, a^2+b^2+c^2+d^2=1\}=\{z_1+z_2j\, \,|\, |z_1|^2+|z_2|^2=1\}\,.
\end{equation*}
which is thus endowed with a multiplicative group structure induced from that of $\H$. 



Let us now consider the group homomorphism $\Phi:S^3\times S^3\rightarrow \SO(4)$ which associates to $(p,q)\in S^3\times S^3$ the map $\Phi_{p,q}:\H \rightarrow \H$ defined by 
$\Phi_{p,q}(h)=phq^{-1}$, which is an isometry of $S^3$. It turns out that $\Phi$ is surjective with kernel $\mathrm{Ker}(\Phi)=\{\pm(1,1)\}$, hence $\Phi$ induces a  1-1 correspondence between  finite subgroups of $\SO(4)$ and finite subgroups of $S^3\times S^3$ containing $\{\pm(1,1)\}$.
Since $(-1,-1)$ is central, two subgroups are conjugate in $\SO(4)$ if and only if their preimages  are conjugate in $S^3\times S^3$.   
To give a classification  of finite subgroups $G$ of $\SO(4)$ up to conjugation, it thus suffices to classify the subgroups $\widehat G=\Phi^{-1}(G)<S^3\times S^3$ containing $\{\pm(1,1)\}$, up to conjugation in $S^3\times S^3$. 
The latter are uniquely determined by the 5-tuple $(L,L_K,R,R_K,\phi)$, where (if $\pi_i:S^3\times S^3\to S^3$ denotes the projection to the $i$-th factor):
\begin{multicols}{2}
\begin{itemize}
\item $L=\pi_1(\widehat G)$;
\item $L_K=\pi_1((S^3\times\{1\})\cap \widehat G)$;
\item $R=\pi_2(\widehat G)$;
\item $R_K=\pi_2((\{1\}\times S^3)\cap \widehat G)$;
\end{itemize}
\end{multicols}
\vspace{-0.6cm}
\begin{itemize}
\item $\phi:L/L_K\to R/R_K$ is a group isomorphism obtained by composing the isomorphisms of $\widehat G/(L_K\times R_K)$ with $L/L_K$ and with $R/R_K$, induced by $\pi_1$ and $\pi_2$ respectively.
\end{itemize}
 Based on \cite[Proposition 1]{Mecchia-seppi2}, two such 5-tuples $(L,L_K,R,R_K,\phi)$ and $(L',L'_K,R',R'_K,\phi')$ correspond to conjugate subgroups if and only if there exist $p,q\in S^3$ such that conjugation by $p$ (which we denote by $c_p$) maps $L$ to $L'$ and $L_K$ to $L_K'$, $c_q$ maps $R$ to $R'$ and $R_K$ to $R_K'$, and $\phi'=c_q\circ \phi\circ c_p^{-1}$.
  
It only remains to determine finite subgroups of $S^3$ up to conjugation. For this purpose, observe that the equatorial $S^2=\{bi+cj+dk \,|\, b^2+c^2+d^2=1\}$ in $S^3$ consists of points equidistant from the North and South poles, which are identified to $1$ and $-1$ in $\H$. Hence an element $(p,q)\in S^3\times S^3$ preserves $S^2$ if and only if $p=q$, and this gives a 2-to-1 group epimorphism
$S^3\to \SO(3)$
by composing the diagonal inclusion of $S^3$ in $S^3\times S^3$ with $\Phi$. Together with Lemma \ref{lemma finite subgroups SO3}, one obtains that the finite subgroups of $S^3$ up to conjugation are:

\begin{itemize}
\item The cyclic group $C_n= \{\cos\left({2\alpha\pi}/{n}\right)+i\sin\left({2\alpha\pi}/{n}\right)\,,\,\alpha=0,\ldots,n-1\}$ for $n\geq 1$. Observe that $C_n$ contains the center $-1$ if and only if $n$ is even.
\item The binary dihedral group $D^*_{4n}=C_{2n}\cup C_{2n} j$ for  $n\geq 2$, which is a central extension of the dihedral group $D_{2n}$ by a group of order 2.
\item The binary tetrahedral group $T^*=\cup_{r=0}^2 ({1}/{2}+{i}/{2}+{j}{2}+{k}/{2})^r D^*_{4}$, which is a central extension of the tetrahedral group.
\item The binary octahedral group $O^*=T^*\cup (\sqrt{2}/{2}+\sqrt{2}j/2) T^*$ which is a central extension of the octahedral group.
\item The binary icosahedral group $I^*=\cup_{r=0}^4 \left(\tau^{-1}/2+\tau j/2+ k/2\right)^r T^*$ which is a central extension of the icosahedral group, where $\tau=(\sqrt{5}+1)/{2})$.
\end{itemize}


Observe that for $n=1$, $D_{4}^ *=\{\pm 1,\pm j\}$ is conjugate to $C_4=\{\pm 1,\pm i\}$. For this reason, the groups $D_{4n}^*$ are only taken with indices $n\geq 2$. The case $n=2$, namely $D_8^ *=\{\pm 1,\pm i,\pm j,\pm k\}$, is also called quaternion group. 

We remark that thoughout the paper, we use the notation $C_n$ for the cyclic subgroup of $S^3$ defined in the first point of the list above, while we reserve the symbol $Z_n$ for the abstract cyclic group of $n$ elements.



\subsection{Listing finite subgroups of $\SO(4)$} \label{subsec finite subgroups}
The list of finite subgroups of $\SO(4)$ up to conjugacy is given, applying the above procedure and following \cite{duval}, in Table \ref{subgroup}, by means of the data determining their preimage $\widehat G=\Phi^{-1}(G)$. In most cases, the isomorphism $\phi$ is uniquely determined up to conjugacy, hence the last entry of the 5-tuple $(L,L_K,R,R_K,\phi)$ is omitted. In Families 1 and $1'$ (resp. 11 and $11'$), $\phi$ is a isomorphism between cyclic (resp. dihedral) groups of order $r$ (resp. $2r$), thus it is encoded by a subscript $s$ such that $\mathrm{gcd}(s,r)=1$, meaning that the canonical generator of $\Z_r$ is sent to $s$ times the canonical generator (and the non-central involution induced by $j$ is sent  to itself, in the dihedral case). 

There are other cases in which a subscript is necessary. For instance, for Family 33 the relevant isomorphism $f$ between $D^*_{8m}/C_{2m}$ and $D^*_{8n}/C_{2n}$, which are isomorphic to the dihedral group of four elements,
is defined by 
\begin{equation}\label{eq:extra isom}
f[e^{i\pi/m}]=[j]\qquad\text{and}\qquad f[j]=[e^{i\pi/n}]~.
\end{equation} Then the group with data $(D^*_{8m},C_{2m},D^*_{8n},C_{2n},f)$ is not conjugate to Family 11 with $r=2$ (in which case the isomorphism between $D^*_{8m}/C_{2m}$ and $D^*_{8n}/C_{2n}$ is trivial) unless $m=1$ or $n=1$. Indeed in this latter case one has $(L,L_K)=(D_8^ *,C_2)$ (or the same for $(R,R_K)$) and the equivalence classes $[j]=\{\pm j\}$ and $[i]=\{\pm i\}$ are conjugate in $S^ 3$ (for instance by $(i+j)/\sqrt{2}$). This  explains Family 33 in Table \ref{subgroup}, which is one of the families missing in the original list of \cite{duval}, with the restriction $m\neq 1$ and $n\neq 1$

A very similar behaviour appears for Family $33'$, which is another additional missing family in Du Val's original list, together with Family 34. For more details we refer to \cite[Section 2.1]{Mecchia-seppi2}.

It is finally important to remark that several families, which we will call for instance Families 2bis, 3bis and so on, are to be added to Table \ref{subgroup} for the following reason.  When the pairs $(L,L_K)$ and $(R,R_K)$ do not coincide, the groups determined by the 5-tuples $(L,L_K,R,R_K,\phi)$ and $(R,R_K,L,L_K,\phi^{-1})$ are conjugate in $\O(4)$ by means of the  orientation-reversing isometry of $S^3$, sending each quaternion  to its inverse, but \emph{not} in $\SO(4)$. For example, we shall call Family 2bis the family of groups with data $(D^*_{4m}/D^*_{4m},C_{2n}/C_{2n})$; on the other hand, there is no Family 1bis since for each group in Family 1, its conjugate by an orientation-reversing isometry is already contained in Family 1 itself; as a final example, there is no Family 20bis as switching the roles of left and right multiplication gives rise to the same group. This analysis is developed more carefully in \cite[Section 3.2]{Mecchia-seppi2}, leading also to the computation of orientation-reversing self-isometries.





\begin{table}
\centering
\begin{adjustbox}{width=0.71\textwidth}
\begin{tabular}{|l|c|c|c|}
\hline
 & $\widehat G$ & order of $G$ & \\
\hline
 1. & $(C_{2mr}/C_{2m},C_{2nr}/C_{2n})_s$ & $2mnr$ & $\gcd(s,r)=1$ \\  
 $1^{\prime}$. & $(C_{mr}/C_{m},C_{nr}/C_{n})_s$ & $(mnr)/2$ & $\gcd(s,r)=1$ $\gcd(2,n)=1$ \\
&&& $\gcd(2,m)=1$  $\gcd(2,r)=2$\\
 2. & $(C_{2m}/C_{2m},D^*_{4n}/D^*_{4n})$ & $4mn$ &  \\ 
 3. & $(C_{4m}/C_{2m},D^*_{4n}/C_{2n})$ & $4mn$ &   \\ 
 4. & $(C_{4m}/C_{2m},D^*_{8n}/D^*_{4n})$ & $8mn$ &   \\ 
 5. & $(C_{2m}/C_{2m},T^*/T^*)$ & $24m$ &   \\
 6. & $(C_{6m}/C_{2m},T^*/D^*_{8})$ & $24m$ &   \\ 
 7. & $(C_{2m}/C_{2m},O^*/O^*)$ & $48m$ &   \\
 8. & $(C_{4m}/C_{2m},O^*/T^*)$ & $48m$ &   \\ 
 9. & $(C_{2m}/C_{2m},I^*/I^*)$ & $120m$ &   \\ 
 10. & $(D^*_{4m}/D^*_{4m},D^*_{4n}/D^*_{4n})$ & $8mn$ &   \\
 11. & $(D^*_{4mr}/C_{2m},D^*_{4nr}/C_{2n})_s$ & $4mnr$ & $\gcd(s,r)=1$ \\ 
 $11^{\prime}$. & $(D^*_{2mr}/C_{m},D^*_{2nr}/C_{n})_s$ & $mnr$ &  $\gcd(s,r)=1$ $\gcd(2,n)=1$  \\
&&&  $\gcd(2,m)=1$ $\gcd(2,r)=2$\\
 12. &  $(D^*_{8m}/D^*_{4m},D^*_{8n}/D^*_{4n})$ & $16mn$ & \\
 13. &  $(D^*_{8m}/D^*_{4m},D^*_{4n}/C_{2n})$ & $8mn$ & \\
 14. &  $(D^*_{4m}/D^*_{4m},T^*/T^*)$ & $48m$ & \\
 15. &  $(D^*_{4m}/D^*_{4m},O^*/O^*)$ & $96m$ & \\
16. &  $(D^*_{4m}/C_{2m},O^*/T^*)$ & $48m$ & \\
17. &  $(D^*_{8m}/D^*_{4m},O^*/T^*)$ & $96m$ & \\
18. & $(D^*_{12m}/C_{2m},O^*/D^*_{8})$ & $48m$ & \\
19. & $(D^*_{4m}/D^*_{4m},I^*/I^*)$ & $240m$ & \\
20. & $(T^*/T^*,T^*/T^*)$ & $288$ & \\
21. & $(T^*/C_2,T^*/C_2)$ & $24$ & \\
$21^{\prime}.$ & $(T^*/C_1,T^*/C_1)$ & $12$ & \\
22. & $(T^*/D^*_{8},T^*/D^*_{8})$ & $96$ & \\
23. & $(T^*/T^*,O^*/O^*)$ & $576$ & \\
24. & $(T^*/T^*,I^*/I^*)$ & $1440$ & \\
25. & $(O^*/O^*,O^*/O^*)$ & $1152$ & \\
26. & $(O^*/C_2,O^*/C_2)$ & $48$ & \\
$26^{\prime}.$ & $(O^*/C_1,O^*/C_1)_{Id}$ & $24$ & \\
$26^{\prime\prime}.$ & $(O^*/C_1,O^*/C_1)_f$ & $24$ & \\
27. & $(O^*/D^*_{8},O^*/D^*_{8})$ & $192$ & \\
28. & $(O^*/T^*,O^*/T^*)$ & $576$ & \\
29. & $(O^*/O^*,I^*/I^*)$ & $2880$ & \\
30. &   $(I^*/I^*,I^*/I^*)$ & $7200$ & \\
31. &   $(I^*/C_2,I^*/C_2)_{Id}$ & $120$ & \\
$31^{\prime}.$ & $(I^*/C_1,I^*/C_1)_{Id}$ & $60$ & \\
32. &   $(I^*/C_2,I^*/C_2)_{f}$ & $120$ & \\
$32^{\prime}.$ & $(I^*/C_1,I^*/C_1)_f$ & $60$ & \\
33. & $(D^*_{8m}/C_{2m},D^*_{8n}/C_{2n})_f$ & $8mn$ &   $m\neq 1$  $n\neq 1$. \\ 
 $33^{\prime}$. & $(D^*_{8m}/C_{m},D^*_{8n}/C_{n})_f$ & $4mn$ &  $\gcd(2,n)=1 \gcd(2,m)=1$ \\
&&& $m\neq 1$ and $n\neq 1$.   \\
34. &  $(C_{4m}/C_{m},D^*_{4n}/C_{n})$ & $2mn$ & $\gcd(2,n)=1 \gcd(2,m)=1$ \\
\hline
\end{tabular}
\end{adjustbox}
\bigskip
\caption{Finite subgroups of $\SO(4)$}
\label{subgroup}
\end{table}


\section{Invariant Seifert fibrations} \label{sec: Seifert S3 invariant}

We will now compare the above list of conjugacy classes of finite subgroups of $\SO(4)$ with the classification of subgroups which preserve Seifert fibrations of $S^3$, up to  fibration-preserving conjugacy. 

\subsection{Seifert fibrations of $S^3$ revisited} We have already introduced the Seifert fibrations of $S^3$ in Section \ref{sec seifert s3}; let us now give a more geometric description. 


The \emph{Hopf fibration} can be defined by means of the action of $S^1$ on $S^3$ by left multiplication: $(e^{i\theta},q)\mapsto e^{i\theta}q$ for $q\in S^3$. The fibers of the Hopf fibration are then the orbits of this (free) action and the projection map can be written as $\pi:S^3\rightarrow S^2$ expressed by 
$$\pi(z_1+z_2 j)=\frac{z_1}{z_2}$$
where we are identifying the target $S^2$ with the Riemann sphere in the model $\mathbb{C}\cup\left\{\infty\right\}$. The subgroup $\NN$ of $\SO(4)$ which \emph{preserves} the Hopf fibration, meaning that $\gamma$ induces a diffeomorphism $\rho(\gamma)$ of the base $S^2$, coincides with 
$\Norm_{S^3\times S^3}(S^1\times\{1\})=\Norm_{S^3}(S^1)\times S^3$,
where it is easily checked that an element $w_1+w_2j$ normalizes $S^1$ if and only if $w_1=0$ or $w_2=0$. Hence $\Norm_{S^3}(S^1)=\{w_1+w_2 j\,|\,w_1=0\text{ or }w_2=0\}=\O(2)^*$ and
$$\NN=\Norm_{S^3\times S^3}(S^1\times\{1\})=\O(2)^*\times S^3~.$$
It is also useful for the following to compute the induced action of $\NN$ on the base $S^2$ of the fibration. For this purpose, it suffices to observe that elements of the form $(e^{i\theta},0)\in\NN$ clearly act trivially on the base; that $(j,0)$ (and therefore all elements of the form $(e^{i\theta}j,0)$) acts by the antipodal map of $S^2$; on the other hand 
$(0,w_1+w_2 j)$  induces the action \begin{equation} \label{equation induced action}
\lambda\mapsto \frac{\overline w_1 \lambda+\overline w_2}{-w_2\lambda+w_1}\,
\end{equation}
on the base $\mathbb{C}\cup\left\{\infty\right\}$.
For instance, elements of the form $(0,e^{i\theta})$ act on $S^2$ by rotations of angle $2\theta$ fixing the poles  $0$ and $\infty$ while $(0,j)$ is a rotation of order two fixing $i$ and $-i$ and switching $0$ and $\infty$. The \emph{anti-Hopf fibration} is then obtained by composing $\pi$ with an orientation-reversing isometry of $S^3$. Choosing the isometry $q\to q^{-1}$, we obtain the expression $(z_1+z_2 j)\mapsto -{\overline z_1}/{z_2}$.

In general the Seifert fibrations of $S^3$ introduced in Section \ref{sec seifert s3}, can be obtained by the action of $S^1$ defined by $(e^{i\theta},z_1+z_2j)\mapsto (e^{iv\theta}z_1,e^{iu\theta}z_2)$ or $(e^{i\theta},z_1+z_2j)\mapsto (e^{-iv\theta}z_1,e^{iu\theta}z_2)$, for $q\in S^3$ and $u,v$ coprime integers.  We call these fibrations \emph{standard} and any Seifert fibration of $S^3$ can be mapped by an orientation-preserving diffeomorphism to a standard one.

The projection of a standard fibration can be written as  $$\pi(z_1+z_2 j)=\frac{z_1^u}{z_2^v}\qquad \text{ or }\qquad\pi(z_1+z_2 j)=\frac{\overline z_1^u}{z_2^v}~.$$ In the first case, the normalizer of the action consists of elements $(w_1+w_2 j,u_1+u_2 j)$ provided $w_2=u_2=0$ or $w_1=u_1=0$ (unless $u=v=1$). From the definition of the $S^1$ action, it can be checked directly that the base orbifold is $S^2(u,v)$ where the two cone points are the images of the fibers $z_1=0$ and $z_2=0$, with local invariants $\bar v/u$ and $\bar u/v$ where $u\bar u +v \bar v=1$.

If $u=v=1$ we recover the Hopf fibration. (Indeed when $u=1$ or $v=1$ the point is regular and the corresponding fiber generic). 

Similarly as before, it is not necessary to repeat the analysis for the fibrations of the second type, as for every pair $(u,v)$ one can pre-compose $\pi$ with the orientation-reversing isometry $q\mapsto q^{-1}$ to obtain a new fibration of $S^3$ which is inequivalent to the previous ones in the category of oriented Seifert fibered manifolds, but equivalent in the category of (unoriented) Seifert fibered manifolds.
Observe moreover that these fibrations always have bad base orbifolds,  with the only exceptions of the Hopf and anti-Hopf fibrations.

\subsection{Seifert fibrations from Du Val's list}

In this subsection we analyze which standard fibrations of $S^3$ are left invariant by the subgroups in the Du Val's list.
Going back to Du Val's list of subgroups of $\SO(4)$ (Table \ref{subgroup}), the groups which preserve the Hopf fibration are those with $L=C_{m}$ or $L=D_{2m}^*$, for some $m$. That is, by Families 1 to 19, 33, $33'$, 34, and moreover Families 2 bis, 3 bis, 4 bis, 13 bis and 34 bis. In \cite{mecchia-seppi},  the invariants of the Seifert fibration induced in the quotient by each of these groups was computed, and we report the results in Table \ref{topolino}. We omitted the results for Families 1, $1'$, 11 and $11'$ which have a more complicated expression (see \cite[Tables 2 and 3]{mecchia-seppi}).

The other fibrations of type $\pi(z_1+z_2 j)={{z}_1^u}/{z_2^v}$ are left invariant only by groups in Family 1,$1'$,11,$11'$ and the spherical orbifolds obtained as quotients by these groups have an infinite number of inequivalent fibrations.
To get a similar analysis for the remaining fibrations it suffices to note that the orientation-reversing isometry $q\mapsto q^{-1}$ maps the fibration $\pi(z_1+z_2 j)={\overline{z}_1^u}/{z_2^v}$ to $\pi(z_1+z_2 j)={z_1^u}/{z_2^v}$, and a group preserves a fibration $\pi(z_1+z_2 j)={\overline{z}_1^u}/{z_2^v}$ if and only if its {conjugate} by $q\mapsto q^{-1}$ preserves $\pi(z_1+z_2 j)={z_1^u}/{z_2^v}$. 

\begin{Remark}\label{anti-Hopf}
The Seifert invariants of the quotient orbifold $S^3/G$ induced by the anti-Hopf fibration can be obtained from those arising from the  Hopf fibration. In fact if a group $G$ with data $(L,L_K,R,R_K,\phi)$ preserves the anti-Hopf fibration, then the group $G'$ given by $(R,R_K,L,L_K,\phi^{-1})$ preserves the Hopf fibration: the Seifert  fibration induced by the anti-Hopf fibration on  $S^3/G$ has the same base orbifold of that induced by the Hopf fibration on $S^3/G'$ while the numerical invariants of $S^3/G$ are the opposite of those of  $S^3/G'$ (Remark \ref{rmk:change orientation}).
\end{Remark}

\begin{table}
\centering
\begin{tabular}{|l|c|c|c|c|c|}
\hline 
  & group & e & base orbifold & invariants & case \\ 
\hline
 2. & $(C_{2m}/C_{2m},D^*_{4n}/D^*_{4n})$ & $-\frac{m}{n}$ & $S^2(2,2,n)$ & $\frac{m}{n},\frac{m}{2},\frac{m}{2}$ & \\ 
  2.bis &  $(D^*_{4m}/D^*_{4m},C_{2n}/C_{2n})$ & $-\frac{m}{n}$ & $D^2(n;)$ & $\frac{m}{n}$ & $n$ even \rule{0pt}{3ex}\\
 &  & & $\mathbb{R}P^2(n)$ & $\frac{m}{n}$ & $n$ odd \\
 3. &  $(C_{4m}/C_{2m},D^*_{4n}/C_{2n})$ & $-\frac{m}{n}$ & $S^2(2,2,n)$ & $\frac{m}{n},\frac{m+1}{2},\frac{m+1}{2}$ & \rule{0pt}{3ex}\\ 
 3.bis &  $(D^*_{4m}/C_{2m},C_{4n}/C_{2n})$ & $-\frac{m}{n}$ & $D^2(n;)$ & $\frac{m}{n}$ & $n$ odd \rule{0pt}{3ex}\\
 &  & & $\mathbb{R}P^2(n)$ & $\frac{m}{n}$ & $n$ even \\

 4. &  $(C_{4m}/C_{2m},D^*_{8n}/D^*_{4n})$ & $-\frac{m}{2n}$ & $S^2(2,2,2n)$ & $\frac{m+n}{2n},\frac{m}{2},\frac{m+1}{2}$ & \rule{0pt}{3ex}\\ 
 4.bis & $(D^*_{8m}/D^*_{4m},C_{4n}/C_{2n})$ & $-\frac{m}{2n}$ & $D^2(2n;)$ & $\frac{m+n}{2n}$ & \rule{0pt}{3ex}\\
 5. & $(C_{2m}/C_{2m},T^*/T^*)$ & $-\frac{m}{6}$ & $S^2(2,3,3)$ & $\frac{m}{2},\frac{m}{3},\frac{m}{3}$ &  \rule{0pt}{3ex}\\ 
 6. & $(C_{6m}/C_{2m},T^*/D^*_{8})$ & $-\frac{m}{6}$ & $S^2(2,3,3)$ & $\frac{m}{2},\frac{m+1}{3},\frac{m+2}{3}$ &  \rule{0pt}{3ex}\\ 
7. & $(C_{2m}/C_{2m},O^*/O^*)$ & $-\frac{m}{12}$ & $S^2(2,3,4)$ & $\frac{m}{2},\frac{m}{3},\frac{m}{4}$ &  \rule{0pt}{3ex}\\ 
 8. &  $(C_{4m}/C_{2m},O^*/T^*)$ & $-\frac{m}{12}$ & $S^2(2,3,4)$ & $\frac{m+1}{2},\frac{m}{3},\frac{m+2}{4}$ &  \rule{0pt}{3ex}\\ 
 9. &  $(C_{2m}/C_{2m},I^*/I^*)$ & $-\frac{m}{30}$ & $S^2(2,3,5)$ & $\frac{m}{2},\frac{m}{3},\frac{m}{5}$ &  \rule{0pt}{3ex}\\ 
10. &  $(D^*_{4m}/D^*_{4m},D^*_{4n}/D^*_{4n})$ & $-\frac{m}{2n}$ & $D^2(;2,2,n)$ & $\frac{m}{n},\frac{m}{2},\frac{m}{2}$ & $n$ even  \rule{0pt}{3ex}\\
 & & & $D^2(2;n)$ & $\frac{m}{n},\frac{m}{2}$ & $n$ odd \\
 12. &   $(D^*_{8m}/D^*_{4m},D^*_{8n}/D^*_{4n})$ & $-\frac{m}{4n}$ & $D^2(;2,2,2n)$ & $\frac{m+n}{2n},\frac{m}{2},\frac{m+1}{2}$ & \rule{0pt}{3ex}\\ 
  13. &   $(D^*_{8m}/D^*_{4m},D^*_{4n}/C_{2n})$ & $-\frac{m}{2n}$ & $D^2(;2,2,n)$ & $\frac{m}{n},\frac{m+1}{2},\frac{m+1}{2}$ & $n$ even  \rule{0pt}{3ex}\\
 & & & $D^2(2;n)$ & $\frac{m}{n},\frac{m+1}{2}$ & $n$ odd \\
13.bis &  $(D^*_{4m}/C_{2m},D^*_{8n}/D^*_{4n})$ & $-\frac{m}{2n}$ & $D^2(;2,2,n)$ & $\frac{m}{n},\frac{m}{2},\frac{m}{2}$ & $n$ odd  \rule{0pt}{3ex}\\
 & & & $D^2(2;n)$ & $\frac{m}{n},\frac{m}{2}$ & $n$ even \\

14.  & $(D^*_{4m}/D^*_{4m},T^*/T^*)$ & $-\frac{m}{12}$ & $D^2(3;2)$ & $\frac{m}{2},\frac{m}{3}$ & \rule{0pt}{3ex}\\
 
15. & $(D^*_{4m}/D^*_{4m},O^*/O^*)$ & $-\frac{m}{24}$ & $D^2(;2,3,4)$ & $\frac{m}{2},\frac{m}{3},\frac{m}{4}$ &  \rule{0pt}{3ex}\\ 
16.  & $(D^*_{4m}/C_{2m},O^*/T^*)$ & $-\frac{m}{12}$ & $D^2(;2,3,3)$ & $\frac{m}{2},\frac{m}{3},\frac{m}{3}$ &  \rule{0pt}{3ex}\\
17.  &  $(D^*_{8m}/D^*_{4m},O^*/T^*)$ & $-\frac{m}{24}$ & $D^2(;2,3,4)$ & $\frac{m+1}{2},\frac{m}{3},\frac{m+2}{4}$ &  \rule{0pt}{3ex}\\ 
18. & $(D^*_{12m}/C_{2m},O^*/D^*_{8})$ &  $-\frac{m}{12}$ & $D^2(;2,3,3)$ & $\frac{m}{2},\frac{m+1}{3},\frac{m+2}{3}$ &  \rule{0pt}{3ex}\\ 

19. &  $(D^*_{4m}/D^*_{4m},I^*/I^*)$ & $-\frac{m}{60}$ & $D^2(;2,3,5)$ & $\frac{m}{2},\frac{m}{3},\frac{m}{5}$ &  \rule{0pt}{3ex}\\ 
33. &  $(D^*_{8m}/C_{2m},D^*_{8n}/C_{2n})_f$ & $-\frac{m}{2n}$ & $D^2(;2,2,n)$ & $\frac{m}{n},\frac{m+1}{2},\frac{m+1}{2}$ & $n$ odd  \rule{0pt}{3ex}\\
&  & & $D^2(2;n)$ & $\frac{m}{n},\frac{m+1}{2}$ & $n$ even \\
$33^{\prime}$. &   $(D^*_{8m}/C_{m},D^*_{8n}/C_{n})_f$ & $-\frac{m}{4n}$ & $D^2(;2,2,n)$ & $\frac{[(m+n)/2]}{n},\frac{m}{2},\frac{m+1}{2}$ & $m,n$ odd \rule{0pt}{3ex}\\

34. &   $(C_{4m}/C_{m},D^*_{4n}/C_{n})$ & $-\frac{m}{2n}$ & $S^2(2,2,n)$ & $\frac{[(m+n)/2]}{n},\frac{m}{2},\frac{m+1}{2}$ & $m,n$ odd \rule{0pt}{3ex}\\
34.bis &  $(D^*_{4m}/C_{m},C_{4n}/C_{n})$ & $-\frac{m}{2n}$ & $D^2(n;)$ & $\frac{[(m+n)/2]}{n}$ & $m,n$ odd \rule{0pt}{3ex}\\

\hline 
\end{tabular}
\bigskip
\caption{Computation of local invariants from Du Val's presentation}
\label{topolino}
\end{table}

\subsection{Spherical orbifolds with multiple fibrations}
In the previous sections we discussed which standard Seifert fibrations of $S^3$ are left invariant by the groups in Du Val's list. If a Seifert fibration of $S^3$ is left invariant by a group $G$ acting on $S^3$, the Seifert fibration of $S^3$ induces a Seifert fibration of the quotient orbifold. If two different standard fibrations are left invariant, the induced fibrations of the quotient orbifold are not equivalent.


Moreover, we remark that finite subgroups in Du Val's list can leave  invariant  also  fibrations that are not  standard, which can induce additional fibrations in the quotient orbifolds.  

In this section we explore this phenomenon; the  following lemma proved in \cite[Lemma 5]{Mecchia-seppi2}  shows that it can occur only in some specific cases.  We call a \textit{non-Hopf fibration} a fibration  that cannot be mapped neither to the Hopf nor to the anti-Hopf fibration.

\begin{Lemma} \label{lemma due casi}
Let $G$ be a finite subgroup of $\SO(4)$ leaving invariant a Seifert fibration $\pi$ of $S^3$, then one of the two following conditions is satisfied:
\begin{enumerate}
\item $G$ is conjugate in $\SO(4)$ to a subgroup in Families $1$, $1'$, $11$ or $11'$ and $\pi$ is a non-Hopf fibration;
\item there exists  an orientation-preserving  diffeomorphism $f:S^3\to S^3$ such that   $\pi\circ f$ is the Hopf or the anti-Hopf fibration and $f^{-1}Gf$ is a subgroup of $\SO(4).$
\end{enumerate}
\end{Lemma}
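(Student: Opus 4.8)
The plan is to reduce both alternatives to the analysis of the \emph{standard} fibrations of $S^3$, for which the defining $S^1$-action already lies in $\SO(4)$. First I would record the dichotomy underlying the statement. By the classification recalled in Subsection~\ref{sec seifert s3}, every Seifert fibration of $S^3$ is carried by an orientation-preserving diffeomorphism to a standard fibration $\pi_{u,v}$ (or to one of the second type), whose base orbifold is $S^2(u,v)$ with $\gcd(u,v)=1$. Since the base orbifold is a diffeomorphism invariant of the fibration, $\pi$ is non-Hopf exactly when $\{u,v\}\neq\{1,1\}$, i.e. when the base carries at least one cone point; otherwise $\pi$ is equivalent to the Hopf or anti-Hopf fibration. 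These are precisely the two cases of the statement.

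The heart of the argument is the following rigidity claim: if $G<\SO(4)$ preserves $\pi$ and $f_0$ is an orientation-preserving diffeomorphism with $\pi\circ f_0$ standard, then $f_0$ may be post-composed with a fibration-preserving diffeomorphism so as to obtain an orientation-preserving $f$ with $\pi\circ f$ still standard and $H:=f^{-1}Gf<\SO(4)$. Granting this, both conclusions follow. The group $H$ preserves the standard fibration $\pi\circ f$, whose $S^1$-action $(e^{i\theta},z_1+z_2j)\mapsto(e^{iv\theta}z_1,e^{iu\theta}z_2)$ lies in $\SO(4)$, so $H$ normalizes this circle inside $\SO(4)$. By the normalizer computation of Subsection~\ref{sec: Seifert S3 invariant}, whenever $\{u,v\}\neq\{1,1\}$ every element $(w_1+w_2j,\,u_1+u_2j)$ of $\widehat H=\Phi^{-1}(H)$ satisfies $w_2=u_2=0$ or $w_1=u_1=0$, so that $\widehat H\subset(S^1\times S^1)\cup(S^1 j\times S^1 j)$. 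This containment forces $L=\pi_1(\widehat H)$ and $R=\pi_2(\widehat H)$ to be finite subgroups of $\O(2)^*$, hence simultaneously cyclic or simultaneously binary dihedral with compatible gluing $\phi$; comparison with Table~\ref{subgroup} then shows that $H$ belongs to one of Families $1,1',11,11'$. Since $G$ is conjugate to $H<\SO(4)$ by the diffeomorphism $f$, Theorem~\ref{derham thm} upgrades this to a conjugacy inside $\SO(4)$, which is conclusion~(1). When instead $\{u,v\}=\{1,1\}$, the fibration $\pi\circ f$ is the Hopf or anti-Hopf fibration and $f^{-1}Gf<\SO(4)$, which is conclusion~(2).

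To establish the rigidity claim I would linearize the finite fibration-preserving group $\Gamma:=f_0^{-1}Gf_0$. It descends to a finite group of orbifold diffeomorphisms of the base $S^2(u,v)$, which can be conjugated to a group of isometries, using the standardness of finite smooth group actions on the $2$-sphere preserving a finite set of marked points. Lifting this conjugation through the fibration, and then averaging a fibrewise Riemannian metric, makes $\Gamma$ act by fibration-preserving isometries, hence (after identifying with the standard round metric) by elements of $\NN\cap\SO(4)$. As all the conjugating diffeomorphisms preserve the standard fibration, their composition with $f_0$ yields the required $f$. This linearization is the main obstacle: it must be carried out equivariantly and compatibly with the exceptional fibers, treating separately the teardrop base $v=1$ (a single exceptional fiber) and the base with two cone points of distinct orders, where the induced isometry necessarily fixes each cone point and $\Gamma$ preserves each exceptional fiber individually.

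Finally I would dispose of the fibrations of the second type, and of the anti-Hopf case, by the symmetry recorded in Remark~\ref{anti-Hopf}: such fibrations arise from those of the first type by pre-composing with the orientation-reversing isometry $q\mapsto q^{-1}$, so a subgroup of $\SO(4)$ preserves one of them if and only if its conjugate by $q\mapsto q^{-1}$, again a subgroup of $\SO(4)$, preserves a fibration of the first type. The analysis above therefore applies verbatim after this harmless conjugation, and the two sub-cases are absorbed by the alternative ``Hopf or anti-Hopf'' in conclusion~(2).
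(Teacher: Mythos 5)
First, a point of comparison: the paper does not actually prove Lemma \ref{lemma due casi}; it imports it verbatim from \cite[Lemma 5]{Mecchia-seppi2}, so there is no internal proof to measure your argument against. Your overall architecture --- reduce to the standard fibrations, split on whether the base is $S^2$ or $S^2(u,v)$ with a cone point, in the latter case read off Families $1$, $1'$, $11$, $11'$ from the normalizer computation of Section \ref{sec: Seifert S3 invariant}, and use de Rham rigidity (Theorem \ref{derham thm}) to upgrade a diffeomorphism-conjugacy of subgroups of $\SO(4)$ to an $\SO(4)$-conjugacy --- is consistent with the framework the paper sets up and is sound as far as it goes.

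The genuine gap is the ``rigidity claim'', which you correctly identify as the main obstacle but do not actually prove. Two problems. (a) Averaging a fibrewise Riemannian metric over the finite group $\Gamma=f_0^{-1}Gf_0$ produces a $\Gamma$-invariant metric $g$, but $g$ is not the round metric, and nothing in your argument shows that $(S^3,g)$ is carried to the round sphere by a fibration-preserving diffeomorphism; the parenthetical ``after identifying with the standard round metric'' is precisely the assertion to be proved. What is needed is that a finite group of fibration-preserving diffeomorphisms of $(S^3,\pi_{u,v})$ can be conjugated, by a fibration-preserving diffeomorphism, into $\SO(4)$ --- an equivariant, fibered form of the smooth linearization of finite group actions on $S^3$. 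This is the hard content of the lemma (it rests on geometrization-type input, not on an averaging trick), and linearizing on the base and then lifting does not yield it: a lift of the base conjugation is only defined up to vertical gauge transformations, and controlling the vertical direction is exactly where the difficulty lies. (b) Your reduction conjugates $G$ by $f_0$ at the outset and thereby discards the hypothesis that $G$ is already a subgroup of $\SO(4)$ --- the one piece of leverage the statement hands you. A workable proof should exploit the fact that $S^3/G$ is a genuine spherical orbifold carrying the induced Seifert fibration (e.g.\ via the geometric realization of Seifert fibrations on geometric orbifolds), rather than re-linearizing an abstract finite group of diffeomorphisms from scratch.
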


Lemma \ref{lemma due casi} has many interesting consequences.

First, if $L$ and $R$ are both isomorphic to $T^*$, $O^*$ or $I^*$, then no fibration of $S^3$ is preserved by the action of the group $(L,L_K,R,R_K,\phi)$.

Then, if a group $G$ leaves invariant a non-Hopf fibration $\pi$,  the base orbifold of the  fibration induced on $S^3/G$ (which is a bad orbifold) is either a sphere with at most two cone points  or a disk with at most two corner points, since it is obtained as a quotient of the bad 2-orbifold $S^2(u,v)$ with $u$ and $v$ different coprime integers.

Finally, if $G$ leaves  invariant a fibration equivalent to the  Hopf fibration or to the anti-Hopf fibration, we can suppose that the fibration is  standard  and $G$ is a subgroup of $\SO(4).$ We remark that this does not imply that $G$ is a subgroup of the Du Val's list; $G$ is conjugate by an isometry to a group in  the Du Val's  list but this isometry in general does not leave invariant the fibration.

We will focus for a moment on the case of groups of isometries which leave invariant the Hopf fibration.
If $G$ leaves invariant the Hopf fibration, then $G$ is a subgroup of  $\NN=\Norm_{S^3\times S^3}(S^1\times\{1\})$. Moreover,
conjugation of $G$ by elements of  $\NN$ 
respects the Hopf fibration, and therefore induces in the quotient orbifold $S^3/G$ a fibration-preserving isometry. We remark that some of the conjugations used in the work of Du Val list do not have this property. Hence in order to get a  classification of Seifert fibered spherical 3-orbifolds up to orientation-preserving diffeomorphism, we need to classify finite subgroups of $\NN$, up to conjugation in $\NN$. This will result in a new list. 
In the following remark, we explain the differences with respect to Du Val's list (Table \ref{subgroup}).

\begin{Remark}\label{itrefenomeni}

There are three classes of phenomena which can occur for the Hopf fibration, marking the difference with Du Val's list.

\begin{enumerate}

\item  The groups  $(L,L_K,R,R_K,\phi)$ and $(R,R_K,L,L_K,\phi^{-1})$  are conjugate by the orientation reversing isometry $q\mapsto q^{-1}$, which maps the Hopf fibration to the anti-Hopf fibration. Hence these groups are not to be considered equivalent for our purposes, although they are equivalent in Du Val's list. As already explained in Section \ref{subsec finite subgroups} and done in Table \ref{topolino}, if the two families obtained by swapping the roles of $L$ and $R$ do not coincide up to orientation-preserving conjugation, they are distinguished by the suffix ``bis'' added to the number used by Du Val's list. Considering the families leaving invariant the Hopf fibration, this phenomenon is relevant for the families 2, 3, 4, 12 and 34. In these cases the ``bis''   families do also preserve the Hopf fibration, but the Seifert invariant of the quotient orbifolds are obviously given by different formulae, see Table \ref{topolino}.


\item The subgroups generated by $i$ and by $j$ are conjugate in $S^3$, but not in $\O(2)^*$. In Table \ref{subgroup}, the subgroup $D_4^*=\{\pm1,\pm j\}<\O(2)^*$ is not considered since it gives the same group as when it is replaced with $C_4=\{\pm1,\pm i\}$ up to conjugation in $\SO(4)$. To classify subgroups in  $\NN$ it is thus necessary to distinguish the two cases for $L$. Observe moreover that  $D_4^*=\{\pm1,\pm j\}<\O(2)^*$ and  $C_4=\{\pm1,\pm i\}$ are not conjugate  in the normalizer of  the group $D_8$; this implies that a  group with $(L,L_K)=(D_8,C_4)$ is not conjugate  to a group with $(L,L_K)=(D_8,D_4)$.


\item When $L=D_8^*$ and $L_K=C_1$ or $C_2$, the groups of Family 33 with $m=1$, namely $(D^*_{8}/C_{2},D^*_{8n}/C_{2n})_f$ (recalling that the isomorphism $f$ is defined in Subsection \ref{subsec finite subgroups}, Equation \eqref{eq:extra isom}) is conjugate in $S^3$ to the case $r=2$, $m=1$ of Family 11, namely $(D^*_{8}/C_{2},D^*_{8n}/C_{2n})$ (where  the automorphism between $L/L_K$ and $R/R_K$ is the identity). But they are not conjugate in $\NN$ unless $n$ also equals $1$. The same occurs for Family $33'$. Although in Du Val's list Families 33 and $33'$ come with the restriction that $m,n\neq 1$, we will thus consider the case $m=1$, $n\neq 1$ as independent.
\end{enumerate}
Of course the considerations of Remark \ref{itrefenomeni} can be repeated analogously for the anti-Hopf fibration, by switching the roles of $(L,L_K)$ and $(R,R_K)$.

\end{Remark}

\section{Classification by diffeomorphism type}\label{classification}

The purpose of this section is to determine a classification of spherical fibered three-orbifolds up to orientation-preserving diffeomorphism. By Theorem \ref{derham thm}, this also turns out to be a classification up to isometry. More concretely, we will provide a recipe to determine when two fibered spherical orientable three-orbifolds are diffeomorphic in terms of the invariants of their fibrations. In fact, recall from the classification theorem that two Seifert fibered orbifolds have the same base orbifold and numerical invariants if and only if there exists an orientation-preserving diffeomorphism which preserves the fibration. 

\subsection{General strategy}

By Proposition \ref{prop determine geometry}, the possible base orbifolds for a Seifert fibration of a closed spherical orbifold necessarily have positive Euler characteristic. Hence  the possible base orbifolds are listed in \eqref{X>0 1}, \eqref{X>0 2} and \eqref{X>0 3}.

Spherical orbifolds may admit infinitely many inequivalent fibrations. This occurs only if the underlying manifold is a lens space. In fact, lens spaces admit infinitely many Seifert fibrations  
in the manifold sense, and the classification of these Seifert fibrations up to diffeomorphism is well-understood (see Subsection \ref{sec:facts lens spaces} below). The situation is however more delicate here since orbifolds with underlying manifold a lens space may admit other fibrations which are substantially different (i.e. the base orbifold may have mirror reflectors and corner reflectors). Also, orbifolds which admit infinitely many fibrations do admit one (infinitely many, in fact) with base orbifold $S^2(b_1,b_2)$ or $D^2(;b_1,b_2)$, namely, either a sphere with at most two cone points or a disc with at most two corner reflectors.

We will proceed as follows. We first consider fibrations with base orbifold $\mathcal B$ not of the form $S^2(b_1,b_2)$ or $D^2(;b_1,b_2)$.
We thus distinguish the various cases according to the base orbifold of the fibration $\pi:\mathcal O\to\mathcal B$ and pointing out in each case which other fibrations are admitted by $\mathcal O$. If $\mathcal O$ admits more than one fibration, it will re-appear in different cases, providing in each one the instructions to obtain the other fibrations of the orbifolds. When $\mathcal O$ also admits a fibration with base $S^2(b_1,b_2)$ or $D^2(;b_1,b_2)$, we will point out one of them (there are in fact infinitely many).
In this analysis we use the results contained in Table \ref{topolino} (see also Table 4 of \cite{mecchia-seppi}), which compute the invariants of quotients $S^3/G$ when $G<\SO(4)$ preserves the Hopf fibration (recall also the discussion of Section \ref{sec: Seifert S3 invariant}).

We then deal (Subsection \ref{subsec:infinitely}) with base orbifolds  of the form $S^2(b_1,b_2)$ or $D^2(;b_1,b_2)$. 
Given such a spherical 3-orbifold $\mathcal O$, with underlying space a lens space, producing a ``list'' of \emph{all} the (infinitely many) Seifert fibrations that $\mathcal O$ admits is rather complicated and beyond the scope of this paper. For the manifold case, an algorithm to list all Seifert fibrations on a given lens space, satisfying a prescribed bound on the ``complexity'' is described in \cite{geigeslange}. Here we will rather describe a procedure which permits (algorithmically) to determine whether two Seifert fibered orbifolds with base orbifold a sphere with at most two cone points are diffeomorphic. Then we do the same for two Seifert orbifolds with base a disc with at most two corner reflectors, and finally we point out the (few) cases in which an orbifold admits fibrations of both types. Sometimes these orbifolds admit 
 additional fibrations with different bases, and these will have already been pointed out in Subsection \ref{subsec:finitely}.


\subsection{Finitely many fibrations} \label{subsec:finitely}

Let us now exhibit the (orientation-preserving) diffeomorphisms between spherical orbifolds with (finitely many) different fibrations, in terms of their invariants.

\begin{case}
The base orbifold is  $S^2(2,2,b).$
\end{case}

The families of groups giving $S^2(2,2,b)$ as base orbifold  are 2, 3, 4 and 34 with the fibration induced by the Hopf fibration  and 2bis, 3bis, 4bis and 34bis with the anti-Hopf fibration. 
By Equation~\eqref{somma eulero invarianti} we obtain that the Euler invariant can be represented by a fraction with denominator $2b$. (If $b$ is even, also by a fraction with denominator $b$.) Let us consider the generic spherical fibered orbifold with  base orbifold $S^2(2,2,b)$:

$$\left(S^2(2,2,b);\,\frac{m_1}{2},\frac{m_2}{2},\frac{m_3}{b};\,\frac{a}{2b}\right)~.$$

We remark that if $a>0$ the fibration is induced by the anti-Hopf fibration while if $a<0$ it is induced by the Hopf fibration. Indeed each of the groups considered in this case preserves both and each quotient orbifold admits at least two inequivalent fibrations.  By using Table \ref{topolino}  and Equation~\eqref{somma eulero invarianti} we can compute the Seifert invariants of both fibrations for each  quotient orbifold (see also Remark~\ref{anti-Hopf}).

\begin{itemize}

\item If $m_1=m_2=0$ then  $a$ is even, then necessarily $m_3 \equiv - a/2 \,(\mod \,b)$. By comparing  the quotient fibrations of Families 2 and 2bis, or 3 and 3bis, we get:
		$$\left(S^2(2,2,b);\,\frac{0}{2},\frac{0}{2},-\frac{a/2}{b};\,\frac{a/2}{b}\right)\cong \left(D^2(|a/2|;);\,\frac{b}{a/2};\,-\frac{b}{a/2};0\right)$$

\item If $m_1=m_2=1$ then  $a$ is again even, then $m_3 \equiv - a/2 \,(\mod \,b)$ and we get analogously:
$$\left(S^2(2,2,b);\,\frac{1}{2},\frac{1}{2},-\frac{a/2}{b};\,\frac{a/2}{b}\right)\cong \left(\mathbb{R}P^2(|a/2|);\,\frac{b}{a/2};\,-\frac{b}{a/2}\right)$$
Note that this family of orbifolds contains  the prism manifolds, one of the families with multiple fibrations in the manifold case, see for instance \cite[Theorem 2.3]{hatchernotes}.

\item 		If $m_1\neq m_2$ we can suppose $m_1=0$ and $m_2=1$; in this case Equation~\eqref{somma eulero invarianti} implies that $a$ and $b$ have the same parity, and $m_3\equiv (a+b)/2\,(\mod \,b)$. From Families 4 and $34$ and their bis versions we get:  

$$\left(S^2(2,2,b);\,\frac{0}{2},\frac{1}{2},-\frac{(a+b)/2}{b};\,\frac{a}{2b}\right)\cong \left(D^2(|a|;);\,\frac{(a+b)/2}{a};\,-\frac{b}{2a};1\right)$$
\end{itemize}
We remark that, here and in what follows, when the base orbifold has mirror reflectors, the invariant $\xi$ is determined by the other invariants (see \cite[Corollary 2.9]{dunbar2}).

Some of these orbifolds admit further inequivalent fibrations because  $i$ and $j$ are not conjugated in $\O(2)^*$ (see Phenomenon 2 described in Remark~\ref{itrefenomeni}).  For example, in Family 2 the groups  with $m=2$ are conjugate in $\SO(4)$ to groups with $m=1$  in Family 10 but this conjugation cannot be performed in   $\NN=\Norm_{S^3\times S^3}(S^1\times\{1\}).$ This implies that an isometric copy of the Hopf fibration in non-standard position is left invariant by  $(C_{4}/C_{4},D^*_{4n}/D^*_{4n})$ inducing on the quotient orbifold a different Seifert fibration. A very similar situation  holds also for the Family 2 bis and the anti Hopf-fibration. 
If $b$ is even we finally obtain that:
$$\left(S^2(2,2,b);\,\frac{0}{2},\frac{0}{2},\pm\frac{2}{b};\,\mp\frac{2}{b}\right)\cong \left(D^2(;2,2,b);;\,\frac{1}{2},\frac{1}{2},\pm\frac{1}{b};\,\mp\frac{1}{2b};1\right)~,$$
and  if $b$ is odd we get:

$$\left(S^2(2,2,b);\,\frac{0}{2},\frac{0}{2},\pm\frac{2}{b};\,\mp\frac{2}{b}\right)\cong \left(D^2(2;b);\,\frac{1}{2};\,\pm\frac{1}{b};\,\mp\frac{1}{2b};1\right)~.$$
Hence we can conclude that $\left(S^2(2,2,b);\,0/2,0/2,\pm 2/b;\,\mp 2/b \right)$ admits three different fibrations for every $b$. 

The analogous situation occurs for groups with $m=1$ in Family 4 and groups of Family 13 bis with $m=1$. So we consider $b$ even and we obtain that   base of the extra fibration depends on the parity of $b/2$; in fact if $b/2$ is odd we obtain:

$$\left(S^2(2,2,b);\,\frac{0}{2},\frac{1}{2},\pm\frac{1+b/2}{b};\,\mp\frac{1}{b}\right)\cong \left(D^2(;2,2,b/2);;\,\frac{1}{2},\frac{1}{2},\pm\frac{1}{b/2};\,\mp\frac{1}{b};1\right)$$
while if $b/2$ is even we get:

 $$\left(S^2(2,2,b);\,\frac{0}{2},\frac{1}{2},\pm\frac{1+b/2}{b};\,\mp\frac{1}{b}\right)\cong \left(D^2(2;b/2);\,\frac{1}{2};\,\pm\frac{1}{b/2};\,\mp\frac{1}{b/2};1\right).$$
Also these orbifolds admit three fibrations.

Finally the groups in Families 3 and 34 with $m=1$ are conjugate in $\SO(4)$ (but not in   $\NN$) to groups in Family 11 and $11'$; the same holds for the groups in Families 3bis and 34bis with $n=1$ and the anti-Hopf fibration. We can conclude   that the orbifolds $$\left(S^2(2,2,b);\,\frac{0}{2},\frac{0}{2},\pm\frac{1}{b};\,\mp\frac{1}{b}\right)\qquad\text{and}\qquad\left(S^2(2,2,b);\,\frac{0}{2},\frac{1}{2},\pm\frac{(1+b)/2}{b};\,\mp\frac{1}{2b}\right)\text{with }b\text{ odd}$$ admit infinitely many fibrations with base orbifold a disk with at most two corner points. These cases are treated in Subsection \ref{subsec:infinitely} since they need a different approach; here we only exhibit a single fibration with such base (whose Seifert invariants are computed by using \cite[Tables 2 and 3]{mecchia-seppi}):
$$\left(S^2(2,2,b);\,\frac{0}{2},\frac{0}{2},\pm\frac{1}{b};\,\mp\frac{1}{b}\right)\cong\left(D^2(;b,b);\,\pm\frac{1}{b},\pm\frac{1}{b};\mp\frac{1}{b};0\right)$$
for the former, and 
$$\left(S^2(2,2,b);\,\frac{0}{2},\frac{1}{2},\pm\frac{(1+b)/2}{b};\,\mp\frac{1}{2b}\right)\cong\left(D^2(;b,b);\,\pm\frac{(1+b)/2}{b},\pm\frac{(1+b)/2}{b};\mp\frac{1}{2b};1\right) \text{with } b \text{ odd}$$
for the latter.

\begin{case}
The base orbifold is  either $D^2(b;)$ or $\mathbb{R}P^2(b).$
\end{case}

In these cases the families to be considered  are 2, 3, 4 and 34 with the fibration induced by the anti-Hopf fibration  and 2bis, 3bis, 4bis and 34bis with the Hopf fibration. Again each of these groups preserves both the Hopf and the anti-Hopf fibration of $S^3$.  The relations between the two fibrations induced in  the quotient orbifold can be deduced from the previous case, since one of the two fibrations has $S^2(2,2,b)$ as base orbifold. 

It remains to consider the extra fibrations caused by Phenomenon 2 in Remark~\ref{itrefenomeni}. 
First, groups of Family 2 with $m=2$ are conjugated to groups of  Family 10 with $m=1$, but the conjugating  elements  are not contained in $\NN$; this implies that the  groups of Families 2 with $m=2$ leave invariant the standard Hopf fibration, the standard anti-Hopf fibration and a non-standard Hopf fibration whose invariants can be obtained considering Family 10 in   Table \ref{topolino}. An analogous situation occurs for Family 2bis when $n=2$. This implies that the orbifolds  $\left(D^2(2;);\frac{b}{2};\,;\pm\frac{b}{2};0\right)$ admit three inequivalent Seifert fibrations.
Indeed, the three fibrations has been already described in the previous case:
$$\left(D^2(2;);\frac{1}{2};\,;\pm\frac{b}{2};0\right)\cong \left(S^2(2,2,b);\,\frac{0}{2},\frac{0}{2},\pm\frac{2}{b};\,\mp\frac{2}{b}\right)\cong \left(D^2(2;b);\,\frac{1}{2};\,\pm\frac{1}{b};\,\mp\frac{1}{2b};1\right)$$
when $b$ is odd and strictly greater than 1, and 

$$\left(D^2(2;);\frac{0}{2};\,;\pm\frac{b}{2};0\right)\cong \left(S^2(2,2,b);\,\frac{0}{2},\frac{0}{2},\pm\frac{2}{b};\,\mp\frac{2}{b}\right)\cong \left(D^2(;2,2,b);;\,\frac{1}{2},\frac{1}{2},\pm\frac{1}{b};\,\mp\frac{1}{2b};1\right)$$
when $b$ is even.

{The groups in Families 2, 3 and 34 with $n=1$  and the ones in Families 2bis, 3bis and 34bis with $m=1$ are conjugate to groups in Families 1 and $1'$. This implies that the following orbifolds admit infinitely many fibrations with base orbifold $S^2$ with at most two cone points, see  Subsection \ref{subsec:infinitely};  here we only exhibit a single fibration with such base: }

\begin{align*}
\left(D^2(b;);\pm \frac{1}{b};\,;\mp\frac{1}{b};0\right)& \cong \left(S^2(b,b);\,\pm\frac{2}{b},\pm\frac{2}{b};\,\mp\frac{4}{b}\right) \text{with } b \text{ even}\\ 
\left(D^2(b;);\pm \frac{1}{b};\,;\mp\frac{1}{b};0\right) &\cong \left(S^2(2b,2b);\,\pm\frac{1+b}{2b},\pm\frac{1+b}{2b};\,\mp\frac{1}{b}\right) \text{with } b \text{ odd}\\
\left(\mathbb{R}P^2(b);\pm \frac{1}{b};\mp\frac{1}{b}\right)&\cong  \left(S^2(b,b);\,\pm\frac{2}{b},\pm\frac{2}{b};\,\mp\frac{4}{b}\right) \text{with } b \text{ odd}\\
\left(\mathbb{R}P^2(b);\pm \frac{1}{b};\mp\frac{1}{b}\right)&\cong \left(S^2(2b,2b);\,\pm\frac{1+b}{2b},\pm\frac{1+b}{2b};\,\mp\frac{1}{b}\right) \text{with } b \text{ even}\\
\left(D^2(b;);\pm \frac{(1+b)/2}{b};\,;\mp\frac{1}{2b};1\right) &\cong  \left(S^2(2b,2b);\,\pm\frac{(1+b)/2}{2b},\pm\frac{(1+3b)/2}{2b};\,\mp\frac{1}{2b}\right) \text{with } b \text{ odd}
\end{align*}

\begin{case}
The base orbifold is $D^2(;2,2,b).$
\end{case}

The families we have to consider are 10, 12, 13, 13 bis, 33, $33'.$ 
By Equation~\eqref{somma eulero invarianti} the Euler invariant can be represented by a fraction with denominator $4b$ and here  the generic  fibered orbifold is:

$$\left(D^2(;2,2,b);\,\frac{m_1}{2},\frac{m_2}{2},\frac{m_3}{b};\,\frac{a}{4b};\,\xi\right)~.$$

Each of these groups preserves both the Hopf  and the anti-Hopf fibration, and by using Table \ref{topolino} and  Remark~\ref{anti-Hopf} we compute the Seifert invariants induced by both. 
\begin{itemize}

\item If $m_1=m_2=0$ then  $a$ is even, and $m_3 \equiv a/2 \,(\mod \,b)$. This kind of fibration in the quotient orbifold is induced by the Hopf fibration left invariant by groups in  Family 10 with $m$ and $n$ even, in Family 13 with $m$ odd and $n$ even, in Family 13bis with $m$ even and $n$ odd and in Family 33 with $m$ and $n$ odd. Considering the anti-Hopf fibration this case occurs for groups 
in  Families 10 with $m$ and $n$ even, in Family 13 with $n$ odd and $m$ even, in Family 13bis with $n$ even and $m$ odd and in Family 33 with $m$ and $n$ odd. 
If the fibration considered is induced by the Hopf fibration we compute the invariants of the other fibration induced by the anti-Hopf fibration and viceversa.
Finally we get the following diffeomorphisms:

	$$\left(D^2(;2,2,b);\,;\,\frac{0}{2},\frac{0}{2},-\frac{a/2}{b};\,\frac{a/2}{2b};\,0\right)\cong \left(D^2(;2,2,|a/2|);\,;\,\frac{0}{2},\frac{0}{2},\frac{b}{a/2};\,-\frac{b}{a};\,0\right)$$

\item If $m_1=m_2=1$ then  $a$ is again even, and $m_3 \equiv a/2 \,(\mod \,b)$. Carrying out an analysis similar to  the previous case which involves again Families 10, 13, 13bis  and 33,  we obtain the following diffeomorphisms:
$$\left(D^2(;2,2,b);\,;\,\frac{1}{2},\frac{1}{2},-\frac{a/2}{b};\,\frac{a/2}{2b};\,1\right)\cong \left(D^2(2;|a/2|);\,\frac{0}{2};\,\frac{b}{a/2};\,-\frac{b}{a};\,0\right)$$

\item 		If $m_1\neq m_2$ we can suppose $m_1=0$ and $m_2=1$; in this case Equation~\eqref{somma eulero invarianti} implies that $a$ and $b$ have the same parity, and $m_3 \equiv (a+b)/2\,(\mod \,b)$. Here the we have to consider Families 12 and $33'.$ Finally we get: 

$$\left(D^2(;2,2,b);\,;\,\frac{0}{2},\frac{1}{2},-\frac{(a+b)/2}{b};\,\frac{a}{4b};1\right)\cong \left(D^2(;2,2,|a|);\,;\,\frac{0}{2},\frac{1}{2},\frac{(a+b)/2}{a};\,-\frac{b}{4a};1\right)$$

\end{itemize}

Let us now consider the extra fibrations given by Phenomena 2 and 3 of Remark~\ref{itrefenomeni}. 

Phenomenon 2 involves the groups in  Families 10 and 13bis with $m=1,$ which leave invariant a  non-standard Hopf fibration, and in  Families 10 and 13  with $n=1,$ which leave invariant a non-standard anti-Hopf fibration.
We obtain the following diffeomorphisms: 

$$ \left(D^2(;2,2,b);;\,\frac{1}{2},\frac{1}{2},\pm\frac{1}{b};\,\mp\frac{1}{2b};1\right)\cong \left(D^2(2;);\frac{0}{2};\,;\pm\frac{b}{2};0\right)\cong \left(S^2(2,2,b);\,\frac{0}{2},\frac{0}{2},\pm\frac{2}{b};\,\mp\frac{2}{b}\right)$$
when $b$ is even

$$\left(D^2(;2,2,b);;\,\frac{1}{2},\frac{1}{2},\pm\frac{1}{b};\,\mp\frac{1}{2b};1\right) \cong \left(D^2(2;);\,\frac{0}{2};\,\pm\frac{b}{2};1\right) \cong \left(S^2(2,2,2b);\,\frac{0}{2},\frac{1}{2},\pm\frac{1+b}{2b};\,\mp\frac{1}{2b}\right)$$
when $b$ is odd. These orbifolds  admit three fibrations. 

\smallskip

Moreover, by Phenomenon 3 the following orbifolds

$$\left(D^2(;2,2,b);\,;\,\frac{0}{2},\frac{0}{2},\pm\frac{1}{b};\,\mp\frac{1}{2b};0\right)\text{with } b \text{ odd } \text{and}\,\left(D^2(;2,2,b);\,;\,\frac{0}{2},\frac{1}{2},\pm\frac{(b+1)/2}{b};\,\mp\frac{1}{4b};1\right) \text{with } b \text{ odd}$$
{given by groups in Family 33 with $m=1$ and Family $33'$ with $m=1$   have infinitely many fibrations with base orbifold a disk with two corner points (these groups are conjugate in $\SO(4)$ to groups in Families 11 and $11'$). As usual we describe here one of these fibrations and the general analysis can be found in Subsection \ref{subsec:infinitely}.}

\begin{align*}
\left(D^2(;2,2,b);\,;\,\frac{0}{2},\frac{0}{2},\pm\frac{1}{b};\,\mp\frac{1}{2b};0\right)& \cong \left(D^2(;2b,2b);\,;\,\pm\frac{b+1}{2b},\pm\frac{b+1}{2b};\,\mp\frac{1}{2b};1\right) \\ 
\left(D^2(;2,2,b);\,;\,\frac{0}{2},\frac{1}{2},\pm\frac{(b+1)/2}{b};\,\mp\frac{1}{4b};1\right)& \cong \left(D^2(;2b,2b);\,;\,\pm\frac{(3b+1)/2}{2b},\pm\frac{(b+1)/2}{2b};\,\mp\frac{1}{4b};1\right)
\end{align*}
where $b$ is an odd integer in both cases.

\begin{case}
The base orbifold is $D^2(2;b).$
\end{case}

The groups giving fibered quotients with base orbifold  $D^2(2;b)$ are contained in Families 10, 13, 13bis, 33. Each of these groups leaves invariant both the Hopf and the anti-Hopf fibration, so the quotient orbifold has at least two fibrations and the possible base orbifolds  are  $D^2(2;b)$ and $D^2(;2,2,b).$ The cases when  $D^2(;2,2,b)$ appears as base orbifold of one of the two fibrations  have been already treated in the previous case. When both fibrations have  $D^2(2;b)$ we obtain the following diffeomorphism:

$$\left(D^2(2;b);\,\frac{1}{2};\,-\frac{a}{b};\,\frac{a}{2b};\,1\right)\cong \left(D^2(2;|a|);\,\frac{1}{2};\,\frac{b}{a};\,-\frac{b}{2a};\,1\right).$$
Phenomena 2 and 3 of Remark~\ref{itrefenomeni}  give extra fibrations to the following orbifolds:

\begin{itemize}
\item  $\left(D^2(2;b);\,\frac{1}{2};\,\pm \frac{1}{b};\,\mp\frac{1}{2b};\,1\right)$  whose fibration is induced  by the Hopf fibration left invariant by  groups in Families 10 and  13bis with $m=1$, and by the anti-Hopf fibration left invariant by groups in Families 10 and  13bis with $n=1$; these orbifolds  have three fibrations and the extra fibrations (already described in the previous case) are: 

$$\left(D^2(2;);\frac{1}{2};\,;\pm\frac{b}{2};0\right)\cong \left(S^2(2,2,b);\,\frac{0}{2},\frac{0}{2},\pm\frac{2}{b};\,\mp\frac{2}{b}\right)$$
when $b$ is odd and 

$$\left(D^2(2;);\,\frac{1}{2};\,\pm\frac{b}{2};1\right) \cong \left(S^2(2,2,2b);\,\frac{0}{2},\frac{1}{2},\pm\frac{1+b}{2b};\,\mp\frac{1}{2b}\right)  $$
when $b$ is even.

\item $\left(D^2(2;b);\,\frac{0}{2};\,\pm \frac{1}{b};\,\mp\frac{1}{2b};\,1\right)$ with $b$ even, whose fibration is induced by the Hopf fibration left invariants by groups in Family 33 with $m=1$; these orbifolds   have infinitely many fibrations with base orbifold a disk with two corner points because the groups are conjugate to groups in Family 11. Here we describe only one of these fibrations:

 $$\left(D^2(2;b);\,\frac{0}{2};\,\pm \frac{1}{b};\,\mp\frac{1}{2b};\,1\right) \cong\left(D^2(;2b,2b);\,;\,\pm\frac{1+b}{2b};\,\pm \frac{1+b}{2b};\,\mp\frac{1}{2b};\,1\right) $$

\end{itemize}
\begin{case}
The base orbifold is $S^2(2,3,b)$ or $D^2(;2,3,b)$ with $b=3,4,5$.
\end{case}

Fibrations having these base orbifold are induced by the Hopf fibration left invariant by groups in Families 5, 6, 7, 8, 9, 14, 15, 16, 17, 18, 19 and by the anti-Hopf fibration left invariant by the bis-versions of the same families. In this case each group leaves invariant exactly one among the Hopf and the anti-Hopf fibration. With few exceptions due to Phenomenon 2 of Remark \ref{itrefenomeni}, these orbifolds have exactly one fibration.

The  sporadic  diffeomorphisms are given by the  correspondences  between Family 5 ($m=2$) and 14 ($m=1$), Family 7 ($m=2$) and 15 ($m=1$), Family 8 and 16 ($m=1$), Family 9 ($m=2$) and 19 ($m=1$), and the analogous correspondence between the bis-versions of the groups. All the quotient orbifolds thus have 2 inequivalent fibrations. We collect here these diffeomorphisms:
\begin{align*}\left(S^2(2,3,3)\,;\frac{0}{2}\,,\pm \frac{2}{3}\,,\pm \frac{2}{3}\,;\mp \frac{1}{3}\right)&\cong \left(D^2(3;2)\,;\pm \frac{1}{3}\,;\pm \frac{1}{2};\mp \frac{1}{12};1\right) \\
\left(S^2(2,3,4)\,;\frac{0}{2}\,,\pm \frac{2}{3}\,,\pm \frac{2}{4}\,;\mp \frac{1}{6}\right)&\cong \left(D^2(;2,3,4)\,;;\frac{1}{2}\,,\pm \frac{1}{3}\,,\pm \frac{1}{4};\mp \frac{1}{24};1\right) \\
\left(S^2(2,3,4)\,;\frac{0}{2}\,,\pm \frac{1}{3}\,,\pm \frac{3}{4}\,;\mp \frac{1}{12}\right)&\cong \left(D^2(;2,3,3)\,;;\frac{1}{2}\,,\pm \frac{1}{3}\,,\pm \frac{1}{3};\mp\frac{1}{12};1\right)  \\
\left(S^2(2,3,5)\,;\frac{0}{2}\,,\pm \frac{2}{3}\,,\pm \frac{2}{5}\,;\mp\frac{1}{15}\right)&\cong \left(D^2(;2,3,5)\,;;\frac{1}{2}\,,\pm \frac{1}{3}\,,\pm \frac{1}{5};\mp\frac{1}{60};1\right). 
\end{align*}


\subsection{Some facts on lens spaces} \label{sec:facts lens spaces}

Let us briefly recall some generalities on lens spaces. 

\begin{Definition}
The lens space $L(p,q)$ is defined as the manifold obtained by gluing two solid tori $T_1$ and $T_2$ by means of an orientation-reversing diffeomorphism of their boundaries which maps a meridian $\mu_1$ of $T_1$ to $p\lambda_2-q\mu_2$, where $\mu_2$ and $\lambda_2$ are a meridian and a longitude of $T_2$.
\end{Definition} Observe that there is no natural choice of longitude $\lambda_i$ on $\partial T_i$, and in fact if $q\equiv q'\text{ mod }p$, then performing a Dehn twist on $T_2$ gives an orientation-preserving diffeomorphism $L(p,q)\cong L(p,q')$.  For the very same reason, the diffeomorphism type of $L(p,q)$ does not depend on the image of a longitude $\lambda_1$ by the diffeomorphism $\partial T_1\to\partial T_2$. 

Suppose $\mathcal M$ is a Seifert fibered manifold  with base surface $S^2$ and at most two cone points, and associated local invariants $\alpha_1/\beta_1$ and $\alpha_2/\beta_2$ for $\alpha_i$ and $\beta_i$ coprimes (we recall that these are the invariants in the classical notation for Seifert manifolds, see Subsection \ref{rmk manifold}), it is not hard to compute the corresponding lens space. See  \cite[Theorem 4.4]{jnlectures} and \cite[Theorem 4.4]{geigeslange} for a more detailed explanation. Let $T_1$ and $T_2$ be the preimages of two discs $D_1$ and $D_2$ on the base $S^2$,  where $D_1$ contains the first cone point and $D_2$ the second, with $\partial D_1=\partial D_2$. Then $T_1$ and $T_2$ are fibered solid tori. With some computations, which are provided in the given references, one finds that $\mathcal M$ is obtained by gluing $T_1$ and $T_2$ in such a way that a meridian $\mu_1$ of $T_1$ is glued to $p\lambda_2-q\mu_2$, and is thus diffeomorphic to the lens space $L(p,q)$ according to our definition above,
for  
\begin{equation} \label{eq:compute lens from invariants}
p=-\det\begin{pmatrix} \alpha_1 & \alpha_2 \\ -\beta_1 & \beta_2 \end{pmatrix}=-\alpha_1\beta_2-\beta_1\alpha_2 \qquad\text{and}\qquad q=-\det\begin{pmatrix} \alpha_1 & \gamma_2 \\ \beta_1 & \delta_2 \end{pmatrix}=\beta_1\gamma_2-\alpha_1\delta_2~,
\end{equation}
where the pair $(\gamma_2,\delta_2)$ satisfies 
$$\det\begin{pmatrix} \alpha_2 & \gamma_2 \\ -\beta_2 & \delta_2 \end{pmatrix}=\alpha_2\delta_2-\beta_2\gamma_2=1~.$$
The choice of such a pair $(\gamma_2,\delta_2)$ is not unique, but any two choices differ by a multiple of $(\alpha_2,\beta_2)$, hence giving the same result up to the usual modulo $p$ ambiguity for $q$. 

\begin{Remark}
We remark that the expressions in \eqref{eq:compute lens from invariants} are slightly different to those recorded in \cite[Theorem 4.4]{jnlectures} and \cite[Theorem 4.4]{geigeslange}: this is because we are adopting a different convention here for the classification data of a Seifert fibration, following \cite{bonahon-siebenmann} and \cite{hatchernotes}. To pass from our convention to that used in \cite{jnlectures} and \cite{geigeslange} one should switch the roles of the $\alpha_i$ and $\beta_i$, and there is also a sign difference.
\end{Remark}

We shall now explain more precisely necessary and sufficient conditions for two lens spaces to be (orientation-preserving) diffeomorphic. A fundamental fact is the following (proved in \cite{MR710104}, see also \cite[Theorem 10.1.12]{martellilibro} and \cite[Theorem 2.5]{hatchernotes}):
\begin{Proposition} \label{prop heegard surface}
Any two Heegard surfaces of genus 1 in a lens space are isotopic. 
\end{Proposition}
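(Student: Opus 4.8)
The plan is to prove that every genus-one Heegard surface $\Sigma\subset L(p,q)$ is isotopic to the standard Heegard torus $\Sigma_0=\partial T_1=\partial T_2$ arising from the decomposition $L(p,q)=T_1\cup T_2$ of the definition. Both tori separate the lens space: $\Sigma$ bounds solid tori $V_1,V_2$, while $\Sigma_0$ bounds the solid tori $T_1,T_2$, and I fix meridian disks $D_i\subset V_i$ and $E_i\subset T_i$.

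First I would isotope $\Sigma$ into general position with $\Sigma_0$ and minimize the number of components of $\Sigma\cap\Sigma_0$, a disjoint union of simple closed curves on each surface. Using that $L(p,q)$ is irreducible (for $p\geq 1$), any curve bounding a disk simultaneously in $\Sigma$ and in $\Sigma_0$ cobounds, together with these two disks, a sphere that bounds a ball, across which $\Sigma$ can be isotoped so as to delete that curve. Hence, after minimization, every remaining intersection curve is essential on at least one of the two tori, and the curves essential on $\Sigma_0$ consist of parallel copies of a single slope.

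The crux, and the genuinely deep part of the statement, is that both $\Sigma$ and $\Sigma_0$ are \emph{compressible}, so one cannot invoke incompressibility to rule out the remaining essential curves. Here I would exploit the solid-torus structure on both sides at once: the meridian disk $E_i\subset T_i$ meets the essential curves of $\Sigma\cap\Sigma_0$ in a family of arcs, and an outermost such arc cuts off a subdisk of $E_i$ bounded by one arc lying on $\Sigma$ and one arc lying on $\Sigma_0$; pushing $\Sigma$ across this subdisk produces an isotopy strictly lowering the number of intersection circles. Making this outermost-arc exchange rigorous, and checking that it terminates with $\Sigma\cap\Sigma_0=\emptyset$, is precisely the content of the analysis of Bonahon and Otal in \cite{MR710104}, and is the main obstacle of the proof.

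Once $\Sigma\cap\Sigma_0=\emptyset$, the surface $\Sigma$ lies inside one of the solid tori $T_i$; a closed genus-one surface contained in a solid torus and bounding a solid torus on each of its two sides must be boundary-parallel, so $\Sigma$ is parallel to $\partial T_i=\Sigma_0$. This parallelism supplies the desired isotopy carrying $\Sigma$ onto $\Sigma_0$ and concludes the argument.
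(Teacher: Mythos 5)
The paper does not actually prove this proposition: it is quoted as a known fact, with the proof attributed to Bonahon--Otal \cite{MR710104} (and textbook treatments in \cite{martellilibro} and \cite{hatchernotes}). Your proposal, by your own admission, defers the one genuinely difficult step --- eliminating the essential intersection curves between the two \emph{compressible} tori --- to that same reference, so in substance you and the paper are doing the same thing: citing \cite{MR710104}. As a self-contained argument your sketch is therefore incomplete exactly where you say it is; the surrounding reductions (removing inessential curves via irreducibility, and the endgame once $\Sigma\cap\Sigma_0=\emptyset$) are standard and essentially right, though two small points deserve care. First, the meridian disk $E_i$ meets the \emph{curves} of $\Sigma\cap\Sigma_0$ only in points of $\partial E_i$; the arcs you want are the components of $E_i\cap\Sigma$, and the real difficulty is that pushing $\Sigma$ across an outermost subdisk of $E_i$ need not decrease $|\Sigma\cap\Sigma_0|$ --- this is precisely why the naive outermost-arc exchange does not immediately terminate and why the Bonahon--Otal analysis is needed. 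Second, in the final step a torus in a solid torus can also lie inside a $3$-ball (this actually occurs for the Heegaard torus of $S^3=L(1,0)$, where the statement is Waldhausen's theorem), so ``bounds a solid torus on each side, hence boundary-parallel'' requires ruling out, or separately handling, that case.
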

Proposition \ref{prop heegard surface} implies that, given any orientation-preserving diffeomorphism between two lens spaces $L(p,q)$ and $L(p',q')$, one can modify the diffeomorphism so that it maps $\partial T_1=\partial T_2$ to $\partial T'_1=\partial T'_2$. The diffeomorphism now either sends $T_i$ to $T_i$, or $T_1$ to $T_2'$ and $T_2'$ to $T_1$. It is classical to show that the first case occurs if and only if 
\begin{equation} \label{eq: lens no switch}
p=p'\quad \text{and}\quad q\equiv q'\text{ mod }p
\end{equation}
whereas the second if and only if 
\begin{equation}\label{eq: lens with switch}
p=p'\quad \text{and}\quad qq'\equiv 1\text{ mod }p~.
\end{equation}
In light of \eqref{eq: lens no switch}, one checks easily that two equivalent presentations of the same Seifert manifold $\mathcal M$ (as explained in Subsection \ref{rmk manifold}) give an equivalent outcome in Equation \eqref{eq:compute lens from invariants}.

By putting together the two cases \eqref{eq: lens no switch} and \eqref{eq: lens with switch} together, one has:
\begin{Proposition} \label{prop lens space classical}
Two lens spaces $L(p,q)$ and $L(p',q')$ have an orientation-preserving diffeomorphism if and only if
\begin{equation} \label{eq:lens general}
p=p'\quad \text{and}\quad q^{\pm 1}\equiv q'\text{ mod }p~.
\end{equation}
\end{Proposition}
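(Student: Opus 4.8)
The plan is to deduce the statement directly by combining the two cases \eqref{eq: lens no switch} and \eqref{eq: lens with switch}, which have already been established. The observation that makes this work is that the congruence $q^{\pm 1}\equiv q'\text{ mod }p$ is precisely the disjunction of the two conditions appearing there: indeed, since $\gcd(q,p)=1$, the class of $q$ is invertible in $\Z/p\Z$, so $qq'\equiv 1\text{ mod }p$ is equivalent to $q'\equiv q^{-1}\text{ mod }p$. Thus $q\equiv q'$ corresponds to the exponent $+1$ and $qq'\equiv 1$ to the exponent $-1$.

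For the necessity direction, I would start from an orientation-preserving diffeomorphism $f\colon L(p,q)\to L(p',q')$ and invoke Proposition \ref{prop heegard surface}: since the common boundary tori $\partial T_1=\partial T_2$ and $\partial T_1'=\partial T_2'$ are genus-one Heegaard surfaces, after an isotopy I may assume that $f$ carries the Heegaard torus of $L(p,q)$ to that of $L(p',q')$. The induced map on the complementary solid tori then either sends $T_i$ to $T_i'$ for $i=1,2$, or interchanges them; by the two classical computations recorded above, the former forces \eqref{eq: lens no switch} and the latter forces \eqref{eq: lens with switch}. In both cases $p=p'$, and the two congruences together yield $q^{\pm1}\equiv q'\text{ mod }p$.

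For the sufficiency direction, I would treat the two exponents separately. If $q\equiv q'\text{ mod }p$, a Dehn twist on $T_2$ produces the required orientation-preserving diffeomorphism, exactly as observed immediately after the definition of $L(p,q)$. If instead $qq'\equiv 1\text{ mod }p$, then the diffeomorphism interchanging the two solid tori realizes $L(p,q)\cong L(p',q')$, which is the content of \eqref{eq: lens with switch}.

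Since the two individual equivalences \eqref{eq: lens no switch} and \eqref{eq: lens with switch} are already in hand, the only genuine content here is the modular-arithmetic bookkeeping together with the appeal to Proposition \ref{prop heegard surface} to reduce an arbitrary diffeomorphism to one respecting the Heegaard splitting. The point deserving the most care is the verification that a boundary-identifying diffeomorphism of the solid tori exists exactly when the stated congruences hold; this is classical and has been quoted rather than reproved, and granting it, the proposition is a purely formal combination.
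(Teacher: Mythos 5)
Your proposal is correct and follows essentially the same route as the paper: the paper also derives the proposition by quoting Proposition \ref{prop heegard surface} to reduce an arbitrary orientation-preserving diffeomorphism to one preserving the genus-one Heegaard torus, splitting into the two cases \eqref{eq: lens no switch} and \eqref{eq: lens with switch} according to whether the solid tori are exchanged, and observing that their disjunction is exactly the condition $q^{\pm 1}\equiv q'\ \mathrm{mod}\ p$. The only content beyond that combination is the modular-arithmetic identification of $qq'\equiv 1$ with $q'\equiv q^{-1}$, which you handle correctly using $\gcd(p,q)=1$.
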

\subsection{Infinitely many fibrations} \label{subsec:infinitely}

Let us now get back to the diffeomorphism type of spherical orbifolds. 

\begin{case}
The base orbifold is  a sphere  with at most two cone points.
\end{case}

In this case the underlying topological space of the  spherical 3-orbifold is a lens space and the possible singular set is contained in the union of the cores of the tori giving the standard Heegard decomposition of the lens space. Consequentely the singular set may be empty, a knot or a link with two components. Exactly as in the analogous manifold  case, if  an orbifold $S^3/G$ is contained in this family, then it admits infinitely many fibrations, since (a conjugate of) $G$ leaves invariant  all the non-Hopf fibrations.

Let us consider $\mathcal O$ and $\mathcal O'$ with base orbifolds $S^2(b_1,b_2)$ and $S^2(b_1',b_2')$, namely
\begin{equation}\label{eq:coronavirus}
\left(S^2(b_1,b_2);\frac{a_1}{b_1},\frac{a_2}{b_2};e\right)\qquad \left(S^2(b'_1,b'_2);\frac{a'_1}{b'_1},\frac{a'_2}{b'_2};e'\right)~,
\end{equation}
and explain how one can decide whether $\mathcal O$ and $\mathcal O'$ are diffeomorphic. 

The recipe is as follows. First compute the index of singularity of the preimages of each cone point. These are simply given by $\iota_i=\mathrm{gcd}(a_i,b_i)$ for $\mathcal O$ and $\iota'_i=\mathrm{gcd}(a'_i,b'_i)$, for $i=1,2$, where $a_i/b_i$ and $a_i'/b_i'$ are the local invariants. Then the underlying Seifert fibered manifolds $\mathcal M$ and $\mathcal M'$ are given by the same expressions as in \eqref{eq:coronavirus}, except that one needs to replace $a_i$ by $a_i/\iota_i$, $b_i$ by $b_i/\iota_i$, and similarly for $a_i'$ and $b_i'$.

Recalling the explanation given in Subsection \ref{rmk manifold}, using also the Euler classes $e$ and $e'$, one can easily express the underlying manifolds $\mathcal M$ and $\mathcal M'$ in terms of the classical invariants for Seifert manifolds. These will be of the form:
\begin{equation} \label{eq:underlying}
\mathcal M\cong \left(S^2;{\alpha_1}/{\beta_1},{\alpha_2}/{\beta_2}\right)\qquad \mathcal M'\cong\left(S^2;{\alpha'_1}/{\beta'_1},{\alpha'_2}/{\beta'_2}\right)~,
\end{equation}
respectively, where  $\beta_i=b_i/\iota_i$, $\beta_i'=b_i'/\iota_i'$, the $\alpha_i$ will be integer numbers relatively prime with $\beta_i$ and satisfying $\alpha_i/\beta_i\equiv a_i/b_i\,\mod \,1$, and similarly for the $\alpha_i'$.

There are now various possibilities:
\begin{itemize}
\item If $\{\iota_1,\iota_2\}$ does not equal $\{\iota'_1,\iota'_2\}$, then $\mathcal O$ and $\mathcal O'$ are certainly not diffeomorphic, since a diffeomorphism should respect the singularity indices.
\item If $\iota_1=\iota_2=\iota'_1=\iota'_2$, determining whether $\mathcal O$ and $\mathcal O'$ are diffeomorphic amounts to whether their underlying manifolds $\mathcal M$ and $\mathcal M'$ are diffeomorphic. In fact, by the discussion above, a diffeomorphism between lens spaces always maps a solid torus of the standard Heegard decomposition of $\mathcal M$ to a solid torus of the decomposition of $\mathcal M'$, and one can arrange such a diffeomorphism to map the (singular, if the $\iota_i$ are not $1$) cores of each torus of $\mathcal M$ to cores of each torus of $\mathcal M'$. Therefore, it suffices to compute the underlying lens spaces of $\mathcal O$ and $\mathcal O'$ (computed in the first step) by applying \eqref{eq:compute lens from invariants} to \eqref{eq:underlying}, and determine whether they are diffeomorphic by checking if the classical formula \eqref{eq:lens general} holds (Proposition \ref{prop lens space classical}).
\item The last possibility is when $\{\iota_1,\iota_2\}=\{\iota'_1,\iota'_2\}$ but $\iota_1\neq \iota_2$ (and thus $\iota'_1\neq \iota'_2$). In this case one cannot directly apply the standard classification for the underlying lens spaces, since one has to take into account that the cores of the two solid tori in each Heegard decomposition have different singularity index, and the singularity index must be preserved by orbifold diffeomorphisms. Up to switching the order of cone points, let us assume $\iota_1=\iota_1'$ and $\iota_2=\iota'_2$. In the notation used above in this section, $\mathcal O$ and $\mathcal O'$ are diffeomorphic if and only if there is a diffeomorphism of the underlying lens spaces which maps $T_i$ to $T_i'$, for $i=1,2$. According to \eqref{eq: lens no switch}, this is the case if and only if $p=p'$ and $q,q'$ have the same residue modulo $p$. In conclusion, one has to use again the formulae \eqref{eq:compute lens from invariants} to compute $p,q,p',q'$, and check whether they satisfy \eqref{eq: lens no switch} (instead of \eqref{eq:lens general}).
\end{itemize}

{Finally we remark that there are two spherical  3-orbifolds, each of which admits  both infinitely many  fibrations with base orbifold  a 2-sphere  with two cone points and infinitely many  fibrations  with base orbifold  a disk  with two corner points.} Fibrations having base orbifold a disk  with two corner points will be considered in Case 7 below.

The first orbifold is the quotient by the group $(C_4/C_2,C_4/C_2)$ that can be conjugate ({by an isometry which does not preserve the Hopf fibration}) to the group $(D^*_{4}/C_2, D^*_{4}/C_2).$ The underlying topological space of this 3-orbifold is the 3-sphere and the singular set  is the Hopf link  whith local group of order two. Two possible fibrations are $\left (S^2(2,2);\frac{0}{2},\frac{0}{2};-1\right)$ and $\left(D^2;\,;\,;-1;0\right)$. Note that these  are the two fibrations described in the example of Section \ref{subsec:example nonuniqueness}.

The second orbifold is given by the group $(C_4/C_1,C_4/C_1)$ that can be conjugate to the group $(D^*_{4}/C_1, D^*_{4}/C_1)$. In this case the  underlying topological space is the 3-sphere and the singular set  is the trivial knot  whith local group of order two. Two possible fibrations are $\left(S^2(2,2);\frac{0}{2},\frac{1}{2};-\frac{1}{2}\right)$ and $\left(D^2;\,;\,;-\frac{1}{2};1\right)$.





\begin{case}
The base orbifold is a disk  with at most two corner points.
\end{case}

Also in this case all the orbifolds admit infinitely many fibrations, since all the non-Hopf fibrations are preserved by  the groups involved.

Let $\mathcal{O}$ be a fibered orbifold whose base orbifold is $D^2(n_1,n_2)$.
In this case we can consider the 2-fold branched covering $\mathcal{O}'$ of  $\mathcal{O}$ induced by the 2-fold orbifold covering $S^2(n_1,n_2)\rightarrow  D^2(n_1,n_2)$ obtained by doubling the disk along its boundary. The orbifold $\mathcal{O}'$ has the same local invariants of $\mathcal{O}$. ({It is worth remarking that the preimage of a corner reflector in the base orbifold of $\mathcal{O}$ will be a cone point in the base orbifold of $\mathcal{O}'$, but the associated numerical invariants will be the same.}) The Euler class of $\mathcal{O}'$ is twice as that of  $\mathcal{O}$.

Fibrations  with base orbifold $D^2(n_1,n_2)$ are admitted by  orbifolds given by groups in  Families 11 {and $11'$}. These groups are generalized dihedral groups $\mathbb{Z}_2\ltimes A$ where $A$ is an Abelian normal subgroup of rank at most two. The subgroup $A$ is the unique {index two} Abelian subgroup with this property (with the exception of the cases $A\cong \mathbb{Z}_2$ and $A\cong \mathbb{Z}_2 \times  \mathbb{Z}_2$).

The subgroup $A$ corresponds to the unique 2-fold branched covering  induced by the orbifold covering $S^2(n_1,n_2)\rightarrow  D^2(n_1,n_2)$. Also in the two exceptional cases all the possible 2-fold branched coverings having $S^2(n_1,n_2)$ as base orbifold are diffeomorphic.  Hence we have a 1-1 correspondence between the orbifolds of Case 6 and the orbifolds of Case 7. We  can reduce to the same procedure as in the previous case to decide whether two fibered orbifolds with base orbifolds $D^2(n_1,n_2)$ are diffeomorphic.  

Some of these orbifolds admit also a fibration whose base orbifold is not a disk with at most two corner points. We have already listed these exceptional fibrations in Cases 1-6.

\begin{example}\label{ex:2bridge}

Case 7 includes the orbifolds having  the 3-sphere as underlying  topological space and  a 2-bridge link with local group of order two as singular set.  The 2-bridge links are a well known class of links which are classified by their 2-fold branched coverings (see, e.g., \cite{burde-zieschang}). In fact the 2-fold branched covering of each of these orbifolds along the singular set is a lens space; on the other hand, each  lens space has a unique representation  as 2-fold branched covering  of a link \cite{hodgson-rubinstein}.   Hence, the classification of 2-bridge links (and of our 3-orbifolds) can be deduced  by the classification of lens spaces. We denote by $\mathcal{O}(p,q)$ the 3-orbifold  that gives $L(p,q)$ as 2-fold branched covering. We remark that there exists an orientation-preserving diffeomorphism between $\mathcal{O}(p,q)$ and  $\mathcal{O}(p',q')$ if and only if  $p=p'$ and $q^{\pm 1}\equiv q'\text{ mod }p~.$  

A lens space can be obtained as  the quotient of $S^3$ by a cyclic group acting freely; these groups are contained in Family 1 or $1'$. Each group $(C_{2mr}/C_{2m},C_{2nr}/C_{2n})_s$ in Family 1 is  normalized by the involution  $\Phi(j,j)\in\SO(4)$ which has  non-empty fixed point set;  the group $(C_{2mr}/C_{2m},C_{2nr}/C_{2n})_s$  and the involution $\Phi(j,j)$  generate   $(D^*_{4m}/D^*_{4m},D^*_{4n}/D^*_{4n})_s$ in Family 11. We remark that the underlying topological space of the quotient of  $S^3$ by the group $(D^*_{4m}/D^*_{4m},D^*_{4n}/D^*_{4n})_s$ is always the 3-sphere (see \cite{mecchia-seppi}) and, if $(C_{2mr}/C_{2m},C_{2nr}/C_{2n})_s$ acts freely, the quotient orbifold is of type  $\mathcal{O}(p,q).$ The same holds for groups in Familiy $1'$ which are contained in groups in Family $11'$.  This assures us that each 3-orbifold $\mathcal{O}(p,q)$  is the quotient of $S^3$ by a group contained in Families 11 and $11'$. Therefore $\mathcal{O}(p,q)$ admits infinitely many Seifert fibrations with base orbifolds a disk with at most two corner points.  Given a group in Family 11 and $11'$, by using \cite[Tables 2 and 3]{mecchia-seppi} we can decide if it gives an orbifold of type $\mathcal{O}(p,q)$ and compute $p$ and $q$ 

Some of these orbifolds admit also some fibrations with a different base orbifold. We can analyze this phenomenon considering the diffeomorphisms between fibered spherical 3-orbifolds listed in  Cases 1-6. We obtain the following: $\mathcal{O}(1,0)$, whose singular set is the unknot, and $\mathcal{O}(2,1)$, whose singular set in the Hopf link, admit infinitely many fibrations with base orbifold   a sphere  with at most two cone points (Case 5); for each integer $b>1$ the orbifold $\mathcal{O}(b,\pm 1)$  admits a fibration with base orbifold a  sphere with three cone points (Case 1); for each even integer $b>1$ the orbifold  $\mathcal{O}(4b,\pm(1+2b))$ admits a fibration with base orbifold a  disk with one cone point and one corner reflector (Case 4).

\end{example}

\bibliographystyle{alpha}
\bibliography{ms-bibliography}

\end{document}